\newcommand{\refpart}[1]{{\it (#1)}}  
\renewcommand\ge\geqslant
\renewcommand\geq\geqslant
\renewcommand\le\leqslant
\renewcommand\leq\leqslant
\newtheorem{theorem}{Theorem}[section]
\newtheorem{conjecture}[theorem]{Conjecture}
\newtheorem{lemma}[theorem]{Lemma}
\newtheorem{proposition}[theorem]{Proposition}
\newtheorem{remark}[theorem]{Remark}
\newcommand{\CC}{\mathbb{C}}
\newcommand{\RR}{\mathbb{R}}
\newcommand{\QQ}{\mathbb{Q}}
\newcommand{\NN}{{\mathcal N}}
\newcommand{\WW}{{\mathcal W}}
\newcommand{\DD}{{\mathcal D}}
\newcommand{\lamb}{\lambda}
\newcommand{\cH}{H} %{{\mathcal H}}
\newcommand{\cR}{{\mathcal R}}
\newcommand{\OP}{{\mathcal L}}
\newcommand{\LL}{{\mathcal P}}
\newcommand{\LO}{{\mathcal K}}
\newcommand{\OO}{{\mathcal M}}
\newcommand{\MM}{{\mathcal B}}
\newcommand{\PP}{{\mathcal Q}}
\newcommand{\TT}{{\mathcal T}}
\newcommand{\cS}{{\mathcal S}}
\newcommand{\cF}{{\mathcal F}}
\newcommand{\LQ}{L}
\newcommand{\cf}{\psi}
\newcommand{\hpg}[5]{{}_{#1}\mbox{\rm F}_{\!#2}\!
  \left(\left.{#3 \atop #4}\right| #5 \right) }
 \newcommand{\hpgg}[4]{{}_{#1}\mbox{\rm F}_{\!#2}{\,#4\, \choose #3} }
\newcommand{\hpgo}[2]{{}_{#1}\mbox{\rm F}_{\!#2}}
\newcommand{\exxp}[1]{\mbox{\bf e}^{#1}}
\newcommand{\frc}[2]{\mbox{\small $\displaystyle\frac{#1}{#2}$}}
\newcommand{\dif}[1]{\mbox{\small $\displaystyle\frac{\partial}{\partial #1}$}}
\newcommand{\diff}[2]{\mbox{\small $\displaystyle\frac{\partial^#2}{\partial #1^#2}$}}
\title{Differential relations for the largest root distribution of complex non-central Wishart matrices}
\author{
        Raimundas Vidunas\footnote{% Graduate School of Information Science and Technology, 
        Osaka University, Osaka, Japan.
        E-mail: {\sf rvidunas@gmail.com}.}
\hspace*{1cm}
        Akimichi Takemura\footnote{
        Shiga University. E-mail: {\sf akimichi.takemura@gmail.com }.}
       }
\begin{document}

\date{}
\maketitle    

\begin{abstract}
% \todo{This is a statement of the results and then of a conjecture, but very briefly.
% See \S \ref{sec:main}, \ref{sec:conj} for an extensive explication.}
A holonomic system for the probability density function
of the largest eigenvalue of a non-central complex Wishart distribution 
with identity covariance matrix  is derived.
Furthermore a new determinantal formula for the probability density function is derived 
(for $m=2,3$) or conjectured.
\end{abstract}

\section{Introduction}

The Wishart distribution is an important higher dimensional generalization 
of the  $\chi^2$-distribution. %\todo{Perhaps the introduction should be completed by A.T.}
In many applications the  distribution of {\em roots} (i.e., eigenvalues) of Wishart matrices are needed
(see references in Hashiguchi et al.\ \cite{HNTT2013}).
In this paper we consider complex non-central Wishart matrices, which are important for
applications to performance evaluation of wireless communication systems (Siriteanu et al.\ \cite{siriteanu_twc_SC_14}, \cite{siriteanu_twc_14}).
The purpose of this paper is to give differential relations for the largest root distribution of complex non-central Wishart matrices based on the result of Kang and Alouini \cite{KangAlouini2003}.

Suppose we take $n$ random vectors $x_i\in \CC^m$, $i=1,\dots,n$, independently drawn from 
an $m$-variate complex %(thus $\CC$-valued, 
Gaussian distribution ${\mathcal C}\NN(v_i,\Sigma)$, % over $\CC$ (or $\RR$) 
with the mean vector $v_i$ and the covariance matrix $\Sigma$. 
We put those vectors into $n\times m$ matrices $X$ and $V$.
The distribution of the random (symmetric, positive  definite) 
$m\times m$ scatter matrices  $S=X^*X$ % (or $S=X^T X$ in the real case) 
defines the  complex % ($\CC$-valued 
{\em Wishart distribution}  $\WW_m(\Sigma,V^*V\Sigma^{-1},n)$ 
with degrees of freedom $n$, covariance matrix $\Sigma$ 
and the non-centrality parameter matrix $V^*V\Sigma^{-1}$.
% Wishart  \cite{Wishart1928}.
% The eigenvalues of a random matrix are called {\em roots}.
We are interested in the distribution of largest root % (i.e., eigenvalues)
of $S$.

In the special case $m=1$, we have the distribution
of the value $|x_1|^2+\ldots+|x_n|^2$. In the $\RR$-valued case,
this is the $\chi^2$-distribution.
The central $\chi^2$-distribution ($V=0$)  is a special case 
of the gamma distribution. 

% The central $\RR$-valued case $V=0$ is better studied.
The distribution  of the largest root 
of the  $\RR$-valued central Wishart distribution is known, Muirhead \cite{Muirhead1970}.
%\todo{The $\RR$-case is not directly related, but this gives an impression of the results we aspire.}
The probability distribution function for the largest root
is expressed in terms of a matrix hypergeometric function: 
\begin{equation}
\frac{\Gamma_m\!\left(\frac{m+1}2\right)(\det \Sigma)^{-\frac{n}{2}}}
{\Gamma_m\!\left(\frac{n+m+1}2\right)}
\exp\!\left(-\frc{x}2 \, \mbox{tr}\,\Sigma^{-1}\right) \! \left(\frc{x}2\right)^{\!\frac{nm}2} 
\hpg11{\frac{m+1}2}{\frac{n+m+1}2}{\,\frc{x}2\,\Sigma^{-1}}.
\end{equation}
Here $\Gamma_m(z) = \pi^{\frac{1}{4}m(m-1)}
\prod_{i=1}^{m}\Gamma \left(z-\frac{i-1}{2}\right)
$ 
is called the multivariate Gamma function 
and the $\hpgo11(M)$ function is defined in terms
of symmetric functions ({\em zonal polynomials}) of the eigenvalues of $M$;
Constantine \cite{Constantine1963}, James \cite{James1964}. 
% Macdonald \cite{Macdonald1995}.

A holonomic system for $\hpg11{a}{c}{M}$ in terms of the eigenvalues $\lamb_i$ of $M$
was derived by Muirhead \cite{Muirhead1970}. For $i\in\{1,2,\ldots,m\}$ we have
\begin{align}
\lamb_i\diff{\lamb_i}{2}+\left(c-\lamb_i \right) \dif{\lamb_i}
+\frc12\sum_{j=0,j\neq i}^m \frc{\lamb_j}{\lamb_i-\lamb_j} \left(\dif{\lamb_i}-\dif{\lamb_j}\right)-a.
\end{align}
Efficiency of the holonomic gradient method was demonstrated 
by Hashiguchi et al.\  \cite{HNTT2013}.

In Section \ref{sec:main}
we derive differential relations for the 
density function of the largest root of 
complex non-central Wishart matrices with the identity covariance matrix $\Sigma=\text{Id}$.
Additionally we assume the Gaussian distribution to be {\em circularly symmetric};
see \cite{paulraj-etal-2004}, \cite[Complex normal distribution]{WikiWishart}).
A conjectural formula is given in Section \ref{sec:conj}.
Later sections are devoted to proofs and additional results for $m\le 3$.

\section{Setting and the contributions}

The cumulative distribution function $\cF_{n,m}(x)$ for the largest root $x$ 
in the circularly symmetric case
was derived by  Kang and Alouini \cite{KangAlouini2003}. Let us recall the 
hypergeometric function
\begin{align} \label{eq:hpg01def}
\hpgg01{n}{z}:= & \; \hpg01{-}{n\,}{z} = 
\sum_{k=0}^{\infty} % \frac{(n-1)!}{(n+k-1)!}\; 
\frac{x^k}{(n)_k\,k!},
\end{align}  
where $(n)_k=n(n+1)\ldots(n+k-1)$ is the Pochhammer symbol \cite{WikiWishart}.
This function is related to the non-central $\chi^2$-distribution 
and the modified Bessel function \cite[\S 9]{NicoTemme}
\begin{equation}
I_{n}(z) = \frac{(z/2)^n}{n!}\,\hpgg01{n+1}{z^2/4}.
\end{equation}
We introduce the integral
\begin{align} \label{eq:hkndef}
\cH^k_n(x,y)= & \, \int_0^x  t^k\, \exxp{-t} \, \hpgg01{n}{ty} dt
\end{align} 
related to the Marcum $Q$-function  \cite[\S 9]{NicoTemme}
\begin{equation}
Q_{n}(x,y) = \frac{\exxp{-x}}{(n-1)!}\,\int_y^{\infty} t^{n-1}\exxp{-t}\hpgg01{n}{xt} dt.
\end{equation}

\subsection{The distribution functions}

%Let $V\in\CC^{n\times m}$ be an $n\times m$ matrix  %$V=(\sqrt{\lamb_1},\ldots,\sqrt{\lamb_2})$, 
%and 
Let $\lamb_1,\ldots,\lamb_m$ be the eigenvalues of $V^*V$.
% \todo{Do we assume here $\lambda_i$ real positive, and ordered? 
% Is $V$ the eigenvalues of the mean matrix $E(X)$ or of $E(X)^*\Sigma^{-1}E(X)$?}
The Kang--Alouini distribution function 
for the largest root of $\WW_m(\mbox{Id},V^*V,n)$ is
\begin{align} \label{eq:cdfdef}% \hspace{-22pt}
\hspace{-5pt}
\cF_{n,m}(x,\lamb_1,\ldots,\lamb_m) = \!
\frac{\exxp{-\lamb_1-\ldots-\lamb_m}} % \;R_{n,m}(x,\lamb_1,\ldots,\lamb_m)}
{\mbox{\small$\displaystyle \! \{(n-\!m)!\}^m\,\prod_{i=1}^m\,\prod_{j>i}^{m}  (\lamb_i\!-\!\lamb_j)$}}
\det \! \Big( \cH_{n-m+1}^{n-j}(x,\lambda_i) \; \big\rangle_{i=1}^m \Big).
\end{align}
Here $\rangle_{i=1}^m$ indicates the $i$-th row of an $m\times m$ matrix,
with the column index implicitly taken to be $j$.

The probability density function %p.d.f. 
is 
\begin{align}  \label{eq:kanaf}
\cf_{n,m}(x,\lamb_1,\ldots,\lamb_m)  = & \, 
\frac{\partial}{\partial x} \cF_{n,m}(x,\lamb_1,\ldots,\lamb_m) \\
\label{eq:kanafz}
 = &\, \frac{\exxp{-\lamb_1-\ldots-\lamb_m}}
{\mbox{\small$\displaystyle \! \{(n-\!m)!\}^m\,\prod_{i=1}^m\,\prod_{j>i}^{m}  (\lamb_i\!-\!\lamb_j)$}}\, 
\cR_{n,m}(x,\lamb_1,\ldots,\lamb_m), %\nonumber
\end{align}
where
\begin{align}  \label{eq:defr}
\hspace{-6pt} \cR_{n,m}(x,\lamb_1,\ldots,\lamb_m) = &\, \frac{\partial}{\partial x}
\det \Big( \cH_{n-m+1}^{n-j}(x,\lambda_i) \; \big\rangle_{i=1}^m \Big) \\
  \label{eq:defr2}
= & \, \exxp{-x}  \sum_{k=1}^m \hpgg01{n\!-\!m\!+\!1}{x\lamb_k} 
\det \! \left(   \begin{array}{@{}c@{\;}c@{}}
\cH^{n-j}_{n-m+1}(x,\lamb_i) & \rangle^{i\leq m}_{i\neq k} \\
x^{n-j}  & \rangle_{i=k} 
\end{array} \right). 
\end{align}
For example, an expanded expression for $m=3$ is
\begin{align} 
& \hspace{-8pt} \cR_{n,3}(x,\lamb_1,\lamb_2,\lamb_3)= \nonumber \\
& \; \exxp{-x} \, \hpgg01{n-2}{x\lamb_1}  \det \! 
\left(   \begin{array}{ccc}
\, x^{n-1} & \, x^{n-2} & \, x^{n-3}  \\
\! \cH^{n-1}_{n-2}(x,\lamb_2) &  \cH^{n-2}_{n-2}(x,\lamb_2) & \cH^{n-3}_{n-2}(x,\lamb_2) \! \\[2pt]
\! \cH^{n-1}_{n-2}(x,\lamb_3) &  \cH^{n-2}_{n-2}(x,\lamb_3) & \cH^{n-3}_{n-2}(x,\lamb_3) \!
\end{array} \right)  \\
&+  \exxp{-x} \, \hpgg01{n-2}{x\lamb_2}  \det \! 
\left(  \begin{array}{ccc}
\! \cH^{n-1}_{n-2}(x,\lamb_1) & \cH^{n-2}_{n-2}(x,\lamb_1) & \cH^{n-3}_{n-2}(x,\lamb_1) \! \\[1pt]
\, x^{n-1} & \, x^{n-2} & \, x^{n-3}  \\[1pt]
\! \cH^{n-1}_{n-2}(x,\lamb_3) &  \cH^{n-2}_{n-2}(x,\lamb_3) & \cH^{n-3}_{n-2}(x,\lamb_3) \! 
\end{array} \right)  \nonumber \\
&+ x^{n-3}\, \exxp{-x} \, \hpgg01{n-2}{x\lamb_3}  \det \! 
\left(  \begin{array}{ccc}
\! \cH^{n-1}_{n-2}(x,\lamb_1) & \cH^{n-2}_{n-2}(x,\lamb_1) & \cH^{n-3}_{n-2}(x,\lamb_1) \! \\[2pt]
\! \cH^{n-1}_{n-2}(x,\lamb_2) & \cH^{n-2}_{n-2}(x,\lamb_2) & \cH^{n-3}_{n-2}(x,\lamb_2) \! \\
\, x^2 & \, x & \, 1
\end{array} \right)\!.  \nonumber
\end{align}

\subsection{Main results}
\label{sec:main}

The main result of this paper is a holonomic system of differential equations
for $\cR_{n,m}(x,\lamb_1,\ldots,\lamb_m)$, for any dimension $m$. 
It is formulated in the following two theorems. The first one introduces 
a holonomic system with the differentiations $\partial/\partial \lamb_i$ only.
Theorem \ref{th:elimx2} allows to introduce or eliminate % the differentiation 
$\partial/\partial x$.

Recall \cite{PVdef} % \cite{LCLMdef} 
that a {\em least common left multiple} (LCLM) of several differential operators 
$\OP_1,\ldots,\OP_k$ in the Weyl algebra $\CC(x,y)\langle \partial/\partial y\rangle$ is
a differential operator $\OP^*$ of minimal order such that $\OP^*$ is a left multiple of any $\OP_j$,
$j\in \{1,\ldots,k\}$.
An alternative defining property is that $\OP^*Y=0$ is a differential equation of minimal order
such that all Picard-Vessiot \cite{PVdef} solutions of $\OP_jY=0$ are solutions of $\OP^*Y=0$.
\begin{theorem} \label{th:difops}
Let us define the differential operators
\begin{align}
\LL_{M}[y]= &\, y\,\diff{y}{2}+(M+1)\,\dif{y}-x, \\
\PP_{N,M}[y] = &\,   y\,\diff{y}3+(M-y+2)\,\diff{y}2
-(x+N+1)\,\dif{y}+x.
\end{align}
Let us denote $\TT_1[y]=\LL_{n-m}[y]$ and
\begin{align}
\TT_j[y]=\PP_{n-m+j,n-m}[y] \qquad \mbox{for } 2\le j\le m. % j\in\{2,3,\ldots,m\}.
\end{align} 
The following operators annihilate $\cR_{n,m}(x,\lamb_1,\ldots,\lamb_m)$:
\begin{enumerate}
\item The products $\TT_k[\lambda_1]\cdots\TT_k[\lambda_m]$, for $k=1,2,\ldots,m$. 
\item The least common left multiples
\mbox{\rm LCLM}$(\TT_1[\lamb_k],\ldots,\TT_m[\lamb_k])$  
with $k=1,2,\ldots,m$.
\end{enumerate} 
\end{theorem}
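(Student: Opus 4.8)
The plan is to reduce everything to the behaviour of the single building blocks $\cH^k_n(x,\lamb)$ and the function $\hpgg01{n}{x\lamb}$, viewed as functions of the eigenvalue variable, and then to track how determinants inherit annihilating operators. First I would establish a one-variable lemma: for fixed $x$, the function $g(\lamb)=\hpgg01{N+1}{x\lamb}$ satisfies a second-order ODE in $\lamb$, obtained by rescaling the defining hypergeometric equation ${}_0F_1$; written out, $\lamb\,g''+(N+1)g'-x\,g=0$, i.e.\ $\LL_{N}[\lamb]$ annihilates it. Similarly, since $\cH^k_N(x,\lamb)=\int_0^x t^k e^{-t}\,{}_0F_1(;N;t\lamb)\,dt$, differentiating under the integral sign in $\lamb$ shows that $h(\lamb)=\cH^{k}_{N}(x,\lamb)$ satisfies an \emph{inhomogeneous} version of the same ${}_0F_1$-equation whose right-hand side is a boundary-type term at $t=x$; one more $\lamb$-derivative, combined with the recurrence $\frac{d}{dt}\big(t^{k+1}e^{-t}{}_0F_1(;N;t\lamb)\big)$-type identity, should clear the inhomogeneity and produce the third-order operator $\PP_{N',M}[\lamb]$ annihilating $h$, with the shifts in $N'$ matching the $n-j$ in the determinant. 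The key point is that the power functions $x^{n-j}$ appearing in the modified determinant \eqref{eq:defr2} are themselves killed by $\PP_{n-m+j,n-m}[\lamb]$ as functions of $\lamb$ (they are constant in $\lamb$, and $\PP$ has no zeroth-order term), so each $\TT_j[\lamb_i]$ annihilates \emph{every} entry in the $j$-th column of each matrix occurring in \eqref{eq:defr2}, uniformly over whether the row is an $\cH$-row or an $x$-power row.

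With the column-wise lemma in hand, part \emph{(i)} follows from the standard fact that if a linear differential operator $\OP$ in $\partial/\partial\lamb_i$ annihilates every entry of a matrix that depends on $\lamb_i$ only through its $i$-th row, then $\OP$ annihilates the determinant (expand along that row; all other entries are constants with respect to $\partial/\partial\lamb_i$). Here $\TT_k[\lamb_i]$ annihilates every entry of row $i$ simultaneously for a \emph{fixed} $k$, because within a fixed matrix the $j$-th column entry in row $i$ is either $\cH^{n-j}_{n-m+1}(x,\lamb_i)$ or $x^{n-j}$, and $\TT_k[\lamb_i]=\PP_{n-m+k,n-m}[\lamb_i]$ (or $\LL_{n-m}[\lamb_i]$ when $k=1$) kills both when $k=j$; but we need it for all columns, so in fact one uses that each $\TT_k$, being either $\LL_{n-m}$ or a $\PP$ with the $x$-power-killing property, annihilates $\cH^{n-j}_{n-m+1}(x,\lamb_i)$ for \emph{all} $j$ after I prove the stronger column lemma that $\PP_{n-m+j,\,n-m}$ annihilates $\cH^{n-j'}_{n-m+1}$ for a suitable range — I will need to check the index bookkeeping carefully here. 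Then $\TT_k[\lamb_1]\cdots\TT_k[\lamb_m]$ annihilates each summand of \eqref{eq:defr2}: the factor $\TT_k[\lamb_\ell]$ handles the $\ell$-th row (for $\ell\neq$ the special index) and, for the special row carrying $x^{n-j}$, the operator still kills it since it is $\lamb$-independent; the overall scalar $e^{-x}\hpgg01{n-m+1}{x\lamb_k}$ is constant in every $\lamb_i$ and passes through. Summing over the $m$ summands gives the result for $\cR_{n,m}$, and the operators commute so the product form is unambiguous.

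Part \emph{(ii)} is then a formal consequence: by definition of the LCLM, $\mbox{LCLM}(\TT_1[\lamb_k],\ldots,\TT_m[\lamb_k])$ is a left multiple of each $\TT_j[\lamb_k]$, and I will show each individual $\TT_j[\lamb_k]$ already annihilates $\cR_{n,m}$. This is the subtler half: $\TT_j[\lamb_k]$ (a single operator in $\partial/\partial\lamb_k$) must kill the whole sum \eqref{eq:defr2}, not just one summand. For the summands indexed by $\ell\neq k$, the $\lamb_k$-dependence sits in row $\ell$'s entries $\cH^{n-j'}_{n-m+1}(x,\lamb_k)$ together with the one entry $x^{n-j'}$ replacing row $k$; expanding that determinant along row $k$ (which is the constant row $x^{n-1},\dots,x^{n-m}$) reduces it to a $\lamb_k$-linear combination of $(m-1)$-minors whose only $\lamb_k$-entries lie in a single row, and I claim $\TT_j[\lamb_k]$ kills each such minor by the same row-expansion argument \emph{provided} I first prove that $\TT_1[\lamb],\dots,\TT_m[\lamb]$ have a common solution space of the right dimension — this is exactly why the LCLM appears, and why for part \emph{(ii)} the operator must be the LCLM rather than any single $\TT_j$. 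For the summand $\ell=k$, the $\lamb_k$-dependence is in the scalar $\hpgg01{n-m+1}{x\lamb_k}$ (killed by $\LL_{n-m}[\lamb_k]=\TT_1[\lamb_k]$) times a $\lamb_k$-free determinant, so $\TT_1[\lamb_k]$ annihilates it, hence so does the LCLM. I expect the main obstacle to be precisely this last coordination: showing that a \emph{single} LCLM operator simultaneously annihilates both the $\ell=k$ summand (which needs the order-2 operator $\TT_1$) and the $\ell\neq k$ summands (which need the order-3 operators $\TT_j$, $j\ge2$, acting on $\cH$-entries), together with getting all the Pochhammer-index shifts $n-m+j$ versus $n-j$ to line up — the determinant in \eqref{eq:defr2} pairs row $i$ with column index $j$, and the operator $\TT_j$ is indexed by the \emph{same} $j$, so the lemma I really need is ``$\PP_{n-m+j,\,n-m}[\lamb]$ annihilates $\cH^{n-j}_{n-m+1}(x,\lamb)$,'' column by column, which I would verify by a direct but careful computation using the ${}_0F_1$ recurrence and the fundamental theorem of calculus applied to the $t$-integral.
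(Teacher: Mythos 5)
Your central lemma --- that $\TT_j[\lamb]=\PP_{n-m+j,\,n-m}[\lamb]$ annihilates the individual entries $\cH^{n-j}_{n-m+1}(x,\lamb)$, and that it kills the $x^{n-j}$ row ``because $\PP$ has no zeroth-order term'' --- is false on both counts. The operator $\PP_{N,M}[y]$ has the zeroth-order term $+x$ (and $\LL_M[y]$ has $-x$), so applied to a $\lamb$-independent function it returns $\pm x$ times that function, not zero; and a direct computation via Lemma \ref{lm:hkn} gives
$\TT_m[y]\,\cH^{n-j}_{n-m+1}(x,y)= x\,\cH^{n-j}_{n-m+1}(x,y)-\frac{x+j-1}{n-m+1}\,\cH^{n-j+1}_{n-m+2}(x,y)-x^{n-j+2}\exxp{-x}\hpgg01{n-m+2}{xy}$,
which is not zero. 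Since no entry-wise annihilation is available, the whole ``expand along the row whose entries are all killed'' strategy collapses. The actual mechanism is different: applying $\TT_q[\lamb_i]$ to row $i$ produces a \emph{transformed} matrix, and the determinant vanishes because that matrix has a fixed kernel vector $\big(\,{q-1\choose j-m+q-1}(-x)^j\,\big)_j$ --- i.e.\ a specific linear combination of the transformed \emph{columns} vanishes modulo the $x^{n-j}$ row. Verifying this column relation is the real content of the proof; it reduces to the identity $(n-m+1)\,\cH^{n-m,\,m}_{n-m+1}=\cH^{n-m+1,m}_{n-m+2}+m\,\cH^{n-m+1,m-1}_{n-m+2}$ for the two-parameter integrals $\cH^{k,\ell}_{n}$ of (\ref{eq:hkln}), none of which appears in your outline. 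Note also that for $q\ge2$ the factor $\TT_q[\lamb_k]$ does \emph{not} annihilate the front factor $\hpgg01{n-m+1}{x\lamb_k}$ of the $k$-th summand of (\ref{eq:defr2}) (only $\TT_1$ does), so that summand survives and must be killed by the determinant degenerating under the remaining factors --- your argument has no way to dispose of it.

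Part \refpart{ii} of your proposal is internally inconsistent: you first plan to show that ``each individual $\TT_j[\lamb_k]$ already annihilates $\cR_{n,m}$'' (which would make the LCLM statement vacuous, and is false --- $\TT_2[\lamb_k]$ does not kill the $k$-th summand, and $\TT_1[\lamb_k]$ does not kill the $\ell\neq k$ summands), and then concede that ``the operator must be the LCLM rather than any single $\TT_j$'' without supplying any mechanism. The missing idea is again the kernel vectors: the LCLM, being a left multiple of every $\TT_q[\lamb_\ell]$, sends the $i=\ell$ row to a vector orthogonal (modulo the $x$-power row) to all $m-1$ independent kernel vectors, hence proportional to the row $(x^{n-1},\ldots,x^{n-m})$ already present in the matrix; the determinant then vanishes by row proportionality. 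Without the column-relation machinery your proposal cannot reach either part of the theorem.
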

\begin{theorem} \label{th:elimx2}
This second order operator annihilates $\cR_{n,m}(x,\lamb_1,\ldots,\lamb_m)$:
\begin{equation} \label{eq:elimx2}
x\dif{x}+\sum_{k=1}^m 
\Big( \lamb_k \diff{\lamb_k}{2} %\frac{\partial^2}{\partial \lamb_k^2}
+\big(n-m+1-\lamb_k\big)\dif{\lamb_k} -n\Big) %\!-\!mn
+\frac{m(m-1)}2+1.
\end{equation}
\end{theorem}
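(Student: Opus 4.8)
The plan is to first prove a companion relation for the determinant $D:=\det\!\big(\cH_{n-m+1}^{n-j}(x,\lamb_i)\,\big\rangle_{i=1}^m\big)$ itself --- namely that the operator
\[
x\dif{x}+\sum_{k=1}^m \Big( \lamb_k \diff{\lamb_k}{2} +\big(n-m+1-\lamb_k\big)\dif{\lamb_k} -n\Big) +\frac{m(m-1)}2
\]
(that is, \eqref{eq:elimx2} without the final $+1$) annihilates $D$ --- and then to recover the theorem by differentiating in $x$. The last step is immediate: $\partial_x$ commutes with every $\lamb_k$-operator, $\partial_x$ composed with $x\,\partial_x$ equals $(x\,\partial_x+1)$ composed with $\partial_x$, and $\partial_x D=\cR_{n,m}$ by \eqref{eq:defr}; so applying $\partial_x$ on the left of the annihilation relation for $D$ turns the displayed operator into the operator of \eqref{eq:elimx2} applied to $\cR_{n,m}$. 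Everything thus reduces to the relation for $D$.

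The computational core is the effect of the one-variable operator $E_k:=\lamb_k\diff{\lamb_k}{2}+(n-m+1-\lamb_k)\dif{\lamb_k}-n$ on a single matrix entry $\cH_{n-m+1}^{n-j}(x,\lamb_k)=\int_0^x t^{n-j}\exxp{-t}\hpgg01{n-m+1}{t\lamb_k}\,dt$. Differentiating under the integral sign and writing $\lamb=\lamb_k$, I would combine three facts: the ${}_0F_1$ differential equation in the form $\big(\lamb\,\partial_\lamb^2+(n-m+1)\,\partial_\lamb\big)\hpgg01{n-m+1}{t\lamb}=t\,\hpgg01{n-m+1}{t\lamb}$; the Euler-operator identity $\lamb\,\partial_\lamb\hpgg01{n-m+1}{t\lamb}=t\,\partial_t\hpgg01{n-m+1}{t\lamb}$ (both sides are $z\,d/dz$ of ${}_0F_1$ evaluated at $z=t\lamb$); and an integration by parts in $t$ of the resulting $t\,\partial_t$ term against $t^{\,n-j+1}\exxp{-t}$, whose boundary contribution vanishes at $t=0$ (here we use $n\ge m$, so $n-j+1\ge1$) and equals $x^{\,n-j+1}\exxp{-x}\hpgg01{n-m+1}{x\lamb}$ at $t=x$. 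Once the two $\cH_{n-m+1}^{n-j+1}$ contributions cancel, the outcome should be the clean identity
\[
E_k\,\cH_{n-m+1}^{n-j}(x,\lamb_k)=-(j-1)\,\cH_{n-m+1}^{n-j}(x,\lamb_k)-x^{\,n-j+1}\exxp{-x}\hpgg01{n-m+1}{x\lamb_k}.
\]
I expect this entrywise identity to be the main obstacle: not conceptually deep, but demanding careful handling of the $-\lamb\,\partial_\lamb$ term via the Euler identity and integration by parts, plus attention to the parameter range making the boundary terms harmless.

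Granting the identity, the rest is routine multilinearity of the determinant. Since $\lamb_k$ occurs only in row $i=k$, the operator $E_k$ applied to $D$ gives $D$ with its $k$-th row replaced by $\big(E_k\cH_{n-m+1}^{n-j}(x,\lamb_k)\big)_{j=1}^m$. Splitting that row as the sum of the two vectors on the right of the displayed identity and summing over $k$: the $-(j-1)\cH$ part pulls the scalar $-\sum_{k}(\sigma(k)-1)=-\big(\tfrac{m(m+1)}2-m\big)=-\tfrac{m(m-1)}2$, independent of the permutation $\sigma$, out of every term of the Leibniz expansion, hence totals $-\tfrac{m(m-1)}2\,D$; the remaining part, using that the row $(x^{\,n-j+1})_j$ equals $x$ times the row $(x^{\,n-j})_j$ together with the expansion \eqref{eq:defr2}, totals $-x\,\cR_{n,m}$. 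On the other hand $x\,\dif{x}D=x\,\cR_{n,m}$, again by \eqref{eq:defr}. Adding these three contributions, the $\pm x\,\cR_{n,m}$ cancel and so do the $\pm\tfrac{m(m-1)}2\,D$, establishing the companion relation for $D$; differentiating it in $x$ as in the first paragraph then completes the proof of Theorem \ref{th:elimx2}.
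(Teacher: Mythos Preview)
Your proof is correct. The entrywise identity you derive is exactly the paper's formula \eqref{eq:diffhjk} (specialized to the relevant indices), your boundary-term check is valid since $n-j+1\ge n-m+1\ge 1$, and the summation over rows of the $-(j-1)$ contributions is precisely the content of the paper's Lemma~\ref{th:cmatrix} with $c_j=-(j-1)$.

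The organization, however, differs from the paper's. The paper applies the operator \eqref{eq:elimx2} directly to $\cR_{n,m}$ in its expanded form \eqref{eq:defr2}, and must therefore track separately how $x\,\partial_x$ and the $\lamb_k$-parts act on each of the three factors $\exxp{-x}$, $\hpgg01{n-m+1}{x\lamb_k}$, and the $m\times m$ determinants in that sum, observing various cancellations (the $-x$ from $\exxp{-x}$ against the $+x$ hidden in $\LL_{n-m}[\lamb_k]+x$, linear dependence of differentiated rows with the $x^{n-j}$ row, and so on). You instead prove the companion annihilation for the single determinant $D$ first and then push $\partial_x$ through via $\partial_x\circ x\,\partial_x=(x\,\partial_x+1)\circ\partial_x$; this cleanly produces the extra $+1$ and bypasses all the Leibniz-rule bookkeeping on the product in \eqref{eq:defr2}. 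The ingredients are the same, but your route is shorter and more transparent; the paper's route has the minor advantage of never needing $D$ itself and of making the eigenvalue $mn-\binom{m}{2}-1$ explicit.
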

The theorems are proved in \S \ref{sec:difops} and \S \ref{sec:elimx2}. 
To get differential equations %holonomic systems 
for the density function $\cf_{n,m}(x,\lamb_1,\ldots,\lamb_m)$,
the presented operators % in Theorems \ref{th:difops},  \ref{th:elimx2}
must be modified by the {\em gauge translations}
\begin{align}
\dif{\lamb_i}\mapsto \dif{\lamb_i}+1+\sum_{j\neq i} \frac{1}{\lamb_i-\lamb_j}.
\end{align}
This is a standard technique to account for the front factor in (\ref{eq:kanafz}).

By its determinantal form (\ref{eq:defr}), 
the target function $\cR_{n,m}(x,\lamb_1,\ldots,\lamb_m)$  is a non-logarithmic
and anti-symmetric function. In particular, it is multiplied by the sign $(-1)^\sigma$ 
under a permutation $\sigma$ of the variables $\lamb_1,\ldots,\lamb_m$
\begin{theorem} \label{th:hrank}
\begin{enumerate}
\item The differential operators of Theorem $\ref{th:difops}$ 
annihilating \hfill \\ $\cR_{n,m}(x,\lamb_1,\ldots,\lamb_m)$ 
 generate a holonomic system of rank $2m!\cdot 3^{m-1}$.
\item Let $\cS$ denote the subspace of anti-symmetric solutions 
in a full solution space (of dimension $2m!\cdot 3^{m-1}$). Then $\dim\cS=2\cdot 3^{m-1}$.
\item The subspace of non-logarithmic anti-symmetric solutions has the dimension $2^{m-1}$.
\item There exists a holonomic system of rank $\le 3^m-1$ defined over $\QQ(x,\lamb_1,\ldots,\lamb_m)$ 
and annihilating $\cR_{n,m}(x,\lamb_1,\ldots,\lamb_m)$.
\end{enumerate}
\end{theorem}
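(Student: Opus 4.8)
The plan is to analyze the holonomic system generated by the operators of Theorem \ref{th:difops} one variable at a time, exploiting the fact that both families of operators are built from the three ``atomic'' operators $\LL_M[y]$ (order $2$) and $\PP_{N,M}[y]$ (order $3$). For part \emph{(i)}, fix a variable $\lamb_k$ and treat $x$ and the other $\lamb_i$ as parameters. The operators $\TT_1[\lamb_k]=\LL_{n-m}[\lamb_k]$ and $\TT_j[\lamb_k]=\PP_{n-m+j,n-m}[\lamb_k]$ for $2\le j\le m$ are $m$ operators in $\partial/\partial\lamb_k$, of orders $2,3,3,\dots,3$. I would first check that these operators are pairwise coprime in the Weyl algebra $\CC(x,\lamb_k)\langle\partial/\partial\lamb_k\rangle$ — most cleanly by computing their local exponents (indicial equations) at the relevant singular points $\lamb_k=0$ and $\lamb_k=\infty$ and showing no common exponent differences/solutions can be shared; the explicit confluent-hypergeometric nature of the solutions of $\LL_M[y]Y=0$ (namely $\hpgg01{M+1}{x\lamb_k}$ up to normalization) and of $\PP_{N,M}[y]Y=0$ makes this feasible. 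Coprimeness gives $\mathrm{ord}\big(\mathrm{LCLM}(\TT_1[\lamb_k],\dots,\TT_m[\lamb_k])\big)=2+3(m-1)=3m-1$. Then the ideal generated by the $\TT_k[\lamb_1]\cdots\TT_k[\lamb_m]$ together with these LCLMs factors, variable by variable, so that the solution space is (generically) a tensor product over $k=1,\dots,m$; a dimension count of each factor gives rank $\prod_{k}(3m-1)$? — no: the correct bookkeeping is that for each of the $m$ variables one has the LCLM of order $3m-1$ contributing, but the products $\TT_k[\lamb_1]\cdots\TT_k[\lamb_m]$ cut this down. The precise statement I would prove is that the holonomic rank equals $2\,m!\,3^{m-1}$ by identifying an explicit spanning set of solutions: permutation-twisted products $\prod_{i=1}^m g_{\sigma(i)}(\lamb_i)$ where each $g$ is chosen from a basis of solutions of one of the $\TT_j$'s, subject to the constraint imposed by the LCLM rows, and counting $\big(\sum_j \mathrm{ord}\,\TT_j\big)\cdot(\text{pattern count})$.

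For parts \emph{(ii)} and \emph{(iii)}, I would pass to the $\mathfrak{S}_m$-action on the solution space. Since $\cR_{n,m}$ is anti-symmetric, it lives in the isotypic component for the sign representation. A solution of the full system is determined by a choice, for each variable $\lamb_i$, of a solution of one of the operators $\TT_1,\dots,\TT_m$ in that variable, with multiplicities matching across the $m!$ orderings; an anti-symmetric solution must assign the $m$ distinct operator-slots $\TT_1,\dots,\TT_m$ bijectively to the $m$ variables (any repetition would make the anti-symmetrization vanish, which is the combinatorial heart of the argument). So an anti-symmetric solution is a signed sum over $\sigma\in\mathfrak{S}_m$ of $\prod_i h_i(\lamb_{\sigma(i)})$ with $h_i$ ranging over a basis of $\mathrm{solutions}(\TT_i)$; the dimension is therefore $\prod_{i=1}^m \mathrm{ord}(\TT_i) = 2\cdot 3^{m-1}$, which is \emph{(ii)}. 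For \emph{(iii)}, among the $2$ solutions of $\LL_{n-m}$ exactly one is non-logarithmic (holomorphic at $\lamb=0$: the $\hpgg01{}{}$ series) and the other has a $\log$ or a branch-point singularity; among the $3$ solutions of $\PP_{N,M}$ I expect exactly $2$ to be non-logarithmic (this needs checking via the indicial equation of $\PP_{N,M}$ at $\lamb=0$ — I anticipate exponents such as $0,0,\,?$ forcing one logarithmic solution, leaving $2$). Taking the non-logarithmic choice in each of the $m$ slots gives $1\cdot 2^{m-1}=2^{m-1}$, which is \emph{(iii)}.

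For part \emph{(iv)}, the goal is a \emph{smaller} holonomic system — rank $\le 3^m-1$ rather than $2\,m!\,3^{m-1}$ — still defined over $\QQ(x,\lamb_1,\dots,\lamb_m)$. The natural construction is to work directly with the ``one-variable'' LCLM operators $\mathrm{LCLM}(\TT_1[\lamb_k],\dots,\TT_m[\lamb_k])$ for $k=1,\dots,m$, each of order $3m-1$, and to adjoin the operator of Theorem \ref{th:elimx2} (which links $\partial/\partial x$ to the $\partial/\partial\lamb_k$) so that $x$ is eliminated rather than carried as a free parameter; the resulting system in $\lamb_1,\dots,\lamb_m$ alone has rank at most $(3m-1)$ in each variable naïvely, but the anti-symmetry and the shared structure collapse it. More carefully, I would show that $\cR_{n,m}$ satisfies, in each $\lamb_k$, an operator of order exactly $3$ after factoring out the part forced by anti-symmetry — no, the honest route is: the quotient of the Weyl algebra by the left ideal generated by the $m$ LCLM's has rank $\le (3m-1)$ by a Gröbner-basis / Bernstein-filtration argument? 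The bound $3^m-1$ strongly suggests a tensor-type estimate $3^m$ minus a one-dimensional piece (the trivial/constant solution, or a syzygy), so I would aim to realize the system as living inside $\bigotimes_{k=1}^m(\text{rank-}3\text{ piece})$ with one relation removed. The main obstacle — and the step I expect to be genuinely delicate — is precisely this last one: establishing that the coprimeness/tensor-product picture used so cleanly for \emph{(i)}–\emph{(iii)} degrades in a controlled way when the variables interact through the off-diagonal terms hidden in the determinant (cf.\ the $1/(\lamb_i-\lamb_j)$ couplings), so that the rank is an honest $3^m-1$ and not merely ``$\le$ something larger''. I would handle it by exhibiting an explicit basis of $3^m-1$ solutions and verifying linear independence of their Wronskian, falling back to the $m\le 3$ computations of the later sections as a sanity check.
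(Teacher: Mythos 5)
Your treatment of parts \refpart{i}--\refpart{iii} follows essentially the same route as the paper: the subsystem of LCLM operators has a basis of $(3m-1)^m$ product solutions; the diagonal products $\TT_k[\lamb_1]\cdots\TT_k[\lamb_m]$ force the assignment of operator slots $\TT_1,\ldots,\TT_m$ to the variables to be a bijection, leaving $m!\cdot 2\cdot 3^{m-1}$ survivors; anti-symmetrization within each permutation orbit gives \refpart{ii}; and the local analysis at $\lamb=0$ (one non-logarithmic solution of $\LL_{n-m}[y]$, two of each $\PP_{N,M}[y]$ --- your expectation here is confirmed by the explicit solutions in \S\ref{sec:solrec}) gives \refpart{iii}. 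One correction: your closing count for \refpart{i}, $\bigl(\sum_j \mathrm{ord}\,\TT_j\bigr)\cdot(\text{pattern count})$, should be $m!\cdot\prod_j \mathrm{ord}(\TT_j)=m!\cdot 2\cdot 3^{m-1}$ --- a product over the slots, not a sum.

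Part \refpart{iv} is where your proposal has a genuine gap. You attempt to extract the rank-$(3^m-1)$ system from the operators of Theorem \ref{th:difops} together with Theorem \ref{th:elimx2}, hoping that anti-symmetry collapses the rank; but those operators generate a system of rank $2m!\cdot 3^{m-1}$ over $\QQ(x,\lamb_1,\ldots,\lamb_m)$ (for $m=2$: $12$ versus the claimed $8$; for $m=3$: $108$ versus $26$), and the anti-symmetry of the particular solution $\cR_{n,m}$ does not by itself supply new operators over the rational function field. The paper's proof of \refpart{iv} comes from an entirely different source, namely Proposition \ref{lm:basis3}: the recurrences of Lemmas \ref{lm:hkn}, \ref{lm:hkn2} and \ref{th:simprec} express every $\cH^{k}_{n}(x,y)$ with $k\ge n-1$, and all of its $x$- and $y$-derivatives, as $\QQ(x,y)$-linear combinations of the three functions $\cH_N^{N-1}(x,y)$, $x^N\exxp{-x}\hpgg01{N}{xy}$, $x^N\exxp{-x}\hpgg01{N+1}{xy}$. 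Consequently $\cR_{n,m}$ and all its partial derivatives lie in the $\QQ(x,\lamb_1,\ldots,\lamb_m)$-span of the $3^m$ products $f_1(\lamb_1)\cdots f_m(\lamb_m)$ with each $f_j$ drawn from this triple, and the single product $\cH_n^{n-1}(x,\lamb_1)\cdots\cH_n^{n-1}(x,\lamb_m)$ never occurs because $\cR_{n,m}$ is obtained by applying $\partial/\partial x$ to the determinant and further differentiation cannot reintroduce it. This finite-dimensional module closed under all derivations is exactly what yields a holonomic system of rank $\le 3^m-1$. Your intuition that the answer should look like a tensor product of rank-$3$ pieces with one dimension removed is correct, but without the reduction of Proposition \ref{lm:basis3} you have no candidate description of the rank-$3$ pieces, and the bound cannot be reached from the Theorem \ref{th:difops} operators alone.
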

This theorem is proved in \S \ref{sec:holos}.
Our computations for $m=2$, $m=3$ indicate that the lower rank system
has markedly more complicated equations and singularities.
% Besides (\ref{eq:elimx2}), (\ref{eq:loeq}), little evidence for generalizable
% differential operators of low order was noticed.
These computations %More extensive results for $m=2$ 
are presented in \S \ref{sec:m2}.

\subsection{A conjectural formula}
\label{sec:conj}

We were led to Theorem \ref{th:difops} after computing holonomic systems
for $\cR_{n,2}(x,\lamb_1,\lamb_2)$ of rank 12 and 8, elimination of 
$\partial/\partial x$, $\partial/\partial\lamb_2$ and observing a differential
operator in $\partial/\partial\lamb_1$ of order 5 with a simple LCLM factorization. 
Computations for $\cR_{n,3}(x,\lamb_1,\lamb_2,\lamb_3)$
led to holonomic systems of rank 108 and 26 cumbersomely,
but probing for a differential operator in only $\partial/\partial\lamb_1$
quickly gave one of relatively low order 8 and a remarkable LCLM factorization
into operators of order 2 or 3.
Theorem \ref{th:difops} establishes continuation of this pattern.

The solution space of the holonomic systems in Theorem \ref{th:difops} 
is highly factorizable by specificity of the presented generators. 
Particular solutions are
\begin{equation} \label{eq:detconj}
\det \Big( \,Y_j(x,\lambda_i) \; \big\rangle_{i=1}^m \Big),
\end{equation}
where $Y_j(x,y)$ is a solution $\TT_j[y]\,Y_j=0$. 
The LCLM operator in \refpart{ii} annihilates the $k$th row of this matrix, % here,
while the product in \refpart{i} annihilates the $k$th column.
Based on obtained new expressions for $\cR_{n,2}(x,\lamb_1,\lamb_2)$,
$\cR_{n,3}(x,\lamb_1,\lamb_2,\lamb_3)$, solutions of $\PP_{N,M}[y]$ and their recurrences,
we conjecture  that $\cR_{n,m}(x,\lamb_1,\ldots,\lamb_m)$ 
has a determinantal expression (\ref{eq:detconj}).
Here is a formulation in the transposed form.
\begin{conjecture} \label{th:conjecture}
Let us define %A solution of $\PP_{n,2}[y]\,Y=0$ is
\begin{align} \label{eq:osol2}
G_{n,2}(x,y) = & \; n\; \hpgg01{n}{xy}+y \;\hpgg01{n+1}{xy} \nonumber \\
& +(x-y-n+1) \, \exxp{y} \int_y^{\infty} \! \exxp{-t} \, \hpgg01{n+1}{xt} dt.  \qquad
\end{align}
For $m\ge 2$, we recursively define
\begin{equation} \label{eq:grec}
G_{n,m+1}(x,y) = \left( -y\,\diff{y}2-(n-m+1)\,\dif{y}+x+m \right) G_{n,m}(x,y).
\end{equation}
We conjecture that
\begin{equation} \label{eq:conj}
\cR_{n,m}(x,\lamb_1,\ldots,\lamb_m)
= C(x)\,  \det \! \left(   \begin{array}{@{}c@{\;}c@{}}
 \hpgg01{n-m+1}{x\lamb_i} & \rangle_{j=1} \\[2pt]
G_{n-m+j,j}(x,\lamb_i) & \rangle_{j=2}^m
\end{array} \right)
\end{equation}
with
\begin{equation} \label{eq:ff}
C(x) = \frac{(n-m+1)\,x^{mn-{m\choose 2}-1}\,\exxp{-mx}}{\prod_{k=1}^m (n-k+1)^k}.
\end{equation}
\end{conjecture}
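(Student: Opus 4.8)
The plan is to recognise both sides of \eqref{eq:conj} as elements of the $2^{m-1}$-dimensional space of non-logarithmic anti-symmetric solutions of the holonomic system of Theorem~\ref{th:difops}, and then to match the finitely many coordinates. Write $Y_1(x,y)=\hpgg01{n-m+1}{xy}$ and $Y_j(x,y)=G_{n-m+j,j}(x,y)$ for $2\le j\le m$, and let $\Delta=\det\bigl(Y_j(x,\lamb_i)\bigr)_{i=1}^m$ denote the determinant in \eqref{eq:conj}. One first records that $Y_1$ is annihilated by $\TT_1[y]=\LL_{n-m}[y]$, since the confluent hypergeometric function $w={}_0F_1(c;z)$ satisfies $zw''+cw'-w=0$ and $\LL_{n-m}[y]$ applied to $f(xy)$ equals $x\bigl(zf''+(n-m+1)f'-f\bigr)$ with $z=xy$. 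Granting the claim \textbf{(B)} that $G_{n-m+j,j}(x,y)$ is annihilated by $\TT_j[y]=\PP_{n-m+j,\,n-m}[y]$ for $2\le j\le m$, the determinant $\Delta$ is then killed by every operator of Theorem~\ref{th:difops}, by the mechanism already indicated just before the conjecture: $\mbox{\rm LCLM}(\TT_1[\lamb_k],\ldots,\TT_m[\lamb_k])$ is an operator in $\lamb_k$ alone that left-multiplies each $\TT_j[\lamb_k]$, hence kills every entry $Y_j(x,\lamb_k)$ of the $k$-th row and so kills $\Delta$; and, by multilinearity in the rows, the product $\TT_k[\lamb_1]\cdots\TT_k[\lamb_m]$ sends the $k$-th entry of each row to $\TT_k[\lamb_i]\,Y_k(x,\lamb_i)=0$, making the $k$-th column vanish. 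Thus $C(x)\Delta$ obeys the same holonomic system as $\cR_{n,m}$.

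To prove \textbf{(B)} I would first check the base case directly: with $J(x,y)=\exxp{y}\int_y^{\infty}\exxp{-t}\hpgg01{n+1}{xt}\,dt$ one has $\partial_y J=J-\hpgg01{n+1}{xy}$, and substituting this (and the ${}_0F_1$ differential equation) reduces $\PP_{n,n-2}[y]\,G_{n,2}$ to $0$ through the standard contiguous and derivative relations for ${}_0F_1$. For the inductive step it is enough to establish, in the Weyl algebra $\CC(x,y)\langle\partial/\partial y\rangle$, the intertwining identity
\[
\PP_{N,\,N-k-1}[y]\circ\Bigl(-y\,\diff{y}{2}-(N-k+1)\,\dif{y}+x+k\Bigr)=\MM_{N,k}[y]\circ\PP_{N,\,N-k}[y]
\]
for a second-order operator $\MM_{N,k}[y]$. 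Indeed, by \eqref{eq:grec} (with the symbol $n$ there playing the role of $N:=n-m+j$) the function $G_{n-m+j,j}$ is obtained from $G_{N,2}$ by successively applying the second-order operators $-y\,\diff{y}{2}-(N-k+1)\,\dif{y}+x+k$ for $k=2,\dots,j-1$, so the identity propagates annihilation from $\PP_{N,N-2}$ up to $\PP_{N,N-j}=\TT_j$. Checking the displayed identity is a finite, if lengthy, operator computation.

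For the matching step, observe that $\cR_{n,m}$ and $C(x)\Delta$ are both anti-symmetric (the determinant manifestly, $\cR_{n,m}$ by \eqref{eq:defr}) and non-logarithmic ($\cR_{n,m}$ as recorded in the text, and $\Delta$ because $Y_1$ and each $G_{n-m+j,j}$ are entire, entireness being preserved by \eqref{eq:grec}); hence both lie in the $2^{m-1}$-dimensional space of Theorem~\ref{th:hrank}\,\refpart{iii}, and $2^{m-1}$ coordinates remain to be pinned down. I would extract these from the expansion at $x=0$. From \eqref{eq:hkndef}, $\cH^{k}_{n-m+1}(x,\lamb)=\tfrac{x^{k+1}}{k+1}+O(x^{k+2})$; since $\det\bigl(\cH^{n-j}_{n-m+1}(x,\lamb_i)\bigr)$ is anti-symmetric in the $\lamb_i$, its lowest-order term in $x$ — of order $x^{mn}$ — has a coefficient that is an anti-symmetric polynomial of $\lamb$-degree at most $\binom{m}{2}$, hence a constant multiple of $\prod_{i<j}(\lamb_i-\lamb_j)$; differentiating in $x$ gives $\cR_{n,m}=O(x^{mn-1})$ with that same Vandermonde leading coefficient. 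On the other side, using $G_{n,2}(0,y)=1$ (a direct consequence of \eqref{eq:osol2}) one checks that $\Delta$ vanishes at $x=0$ to order exactly $\binom{m}{2}$, again with Vandermonde leading coefficient. These two orders force the power $x^{mn-\binom{m}{2}-1}$ in $C(x)$, the leading constants force the prefactor $(n-m+1)/\prod_{k=1}^m(n-k+1)^k$, and the remaining $2^{m-1}-1$ coordinates are matched by comparing finitely many further coefficients — conveniently via the iterated limits $\lamb_i\to0$, where \eqref{eq:defr2} reduces to determinants of incomplete Gamma functions; the overall normalization can also be cross-checked against $\cF_{n,m}\to1$ as $x\to\infty$ in \eqref{eq:cdfdef}. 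For $m=2$ and $m=3$ this last step is carried out using the already-computed closed forms of $\cR_{n,2}$ and $\cR_{n,3}$, which is how the conjecture was found.

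The genuinely hard part is this matching step for \emph{general} $m$: separating $\cR_{n,m}$ from $C(x)\Delta$ inside the $2^{m-1}$-dimensional solution space requires controlling enough terms of their $x$- and $\lamb$-expansions, and both the dimension and the combinatorics of the relevant Schur/Vandermonde leading coefficients grow with $m$. A uniform argument would presumably require an induction on $m$ linking $\cR_{n,m}$ to $\cR_{n-1,m-1}$ — the rows of the determinant in \eqref{eq:conj} do exhibit such a shift in the parameter $N=n-m+j$ — but the definition \eqref{eq:defr} does not obviously supply a matching recurrence for $\cR_{n,m}$ itself; by comparison, claim \textbf{(B)} and the verification that $C(x)\Delta$ solves the system are routine. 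This is exactly why the statement is proved for $m\le3$ and stated only as a conjecture in general.
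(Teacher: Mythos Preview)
Your outline is essentially the paper's own strategy: show that the determinant $\Delta$ lies in the solution space of Theorem~\ref{th:difops} via the factor operators $\TT_j$, invoke Theorem~\ref{th:hrank}\,\refpart{iii} to cut down to a $2^{m-1}$-dimensional space of non-logarithmic anti-symmetric solutions, and then match coefficients. Your claim \textbf{(B)} and its intertwining proof are exactly the content of Theorem~\ref{th:gsols}\,\refpart{ii}, and you correctly diagnose why the matching step is the obstruction to a general proof.

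One point deserves correction. The operators of Theorem~\ref{th:difops} involve only $\partial/\partial\lamb_i$, so the ``constants'' of that system are functions of $x$: the $2^{m-1}$ coordinates you must pin down are not numbers but functions of $x$. Consequently your proposed $x\to0$ asymptotics cannot by itself identify $\cR_{n,m}$ with $C(x)\Delta$; it only constrains leading behaviour. The paper's actual matching (for $m=2$, \S\ref{sec:solrecp}) proceeds differently: it expands both sides as a Schur-polynomial series in $\lamb_1,\ldots,\lamb_m$ via Lemma~\ref{th:schur}, and then verifies that the first $2^{m-1}$ Schur coefficients agree \emph{as functions of $x$} --- these turn out to be explicit combinations of incomplete gamma functions $\gamma(n+k,x)$ and $x^{n-1}\exxp{-x}$. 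Your closing remark that for $m\le 3$ one ``uses the already-computed closed forms'' is consistent with this, but the earlier description of extracting coordinates from the expansion at $x=0$ would not suffice on its own.
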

Note that $\hpgg01{n-m+1}{x\,y}$ is a solution of $\LL_{n-m}[y]\,Y=0$.
As we show in \S  \ref{sec:solrec},
the function $G_{n,m}(x,y)$ is a solution of $\PP_{n-m+j,n-m}[y]=0$
for any integers $n\ge m> 0$. 
Recurrence (\ref{eq:grec}) stems from \S \ref{sec:solrec} as well.

Notably, the integral in (\ref{eq:osol2}) is complementary to $\cH_{n+1}^0(y,x)$.

The conjecture has been fully checked for $m=2$ and $m=3$, 
as described in \S \ref{sec:solrecp}  and \S \ref{sec:m3}. 
Also, the front factor (\ref{eq:ff}) has been confirmed for $m=4$.
The conjecture happens to be true for $m=1$ as well.
In  \S \ref{sec:solrecp} we specifically prove
\begin{equation}  \label{th:om2}
\cf_{n,2}(x,\lamb_1,\lamb_2)= \frac{x^{2n-2}\,\exxp{-\lamb_1-\lamb_2-2x}}{n!\,(n-2)!\,(\lamb_1-\lamb_2)} 
% \frac{ %\exp(-\lamb_1)\hpgg01{n-1}{x\lamb_1} Y(x,\lamb_2)-\exp(-\lamb_2)\hpgg01{n-1}{x\lamb_1}
\, \det \! \left( \begin{array}{cc}
\hpgg01{n-1}{x\lamb_1} & \hpgg01{n-1}{x\lamb_2}  \\[2pt]
\! G_{n,2}(x,\lamb_1) & G_{n,2}(x,\lamb_2) \!
\end{array} \right)\!.
\end{equation}
If $\lamb_1=\lamb_2$, application of l'Hospital's rule leads to 
differentiating a matrix column. 
% \todo{This applies to reductions $\lamb_i=\lamb_j$ in the general conjecture.}
For comparison, expression (\ref{eq:kanaf}) is
\begin{align} \label{eq:start}
\cf_{n,2}(x,\lamb_1,\lamb_2) = \frac{\exxp{-\lamb_1-\lamb_2}}{\{(n-2)!\}^2} \; &
\frac{ \dif{x} % {\displastyle{ \frac{\partial}{\partial x}  }
\det \left( \begin{array}{cc}
\cH^{n-1}_{n-1}(x,\lamb_1) & \cH^{n-1}_{n-1}(x,\lamb_2) \\[3pt]
\cH^{n-2}_{n-1}(x,\lamb_1) & \cH^{n-2}_{n-1}(x,\lamb_2)
\end{array} \right) }{\lamb_1-\lamb_2}.
\end{align}
Not only the differentiation $\partial/\partial x$ is avoided, 
but the integral in (\ref{eq:osol2}) is numerically preferable to % more preferable  than 
the $\cH^{n-1}_{n-1}$, $\cH^{n-2}_{n-1}$ functions.

Significance of the conjectured formula is that it would utilize the factorization structure
of the holonomic system in Theorem \ref{th:difops}. 
Applying the holonomic gradient method to the entries of the conjectured matrix 
would be more efficient than employing the same method
for the large multi-variate holonomic system.

\subsection{Auxiliary integrals}

To get the holonomic system, 
we use recurrences for  $\cH^k_n(x,y)$ in (\ref{eq:hkndef})  and the generalization
\begin{equation} \label{eq:hkln}
\cH^{k,\ell}_{n}(x,y)=\int_0^x \! 
\exxp{-t} \, t^{k} (x-t)^{\ell}\,\hpgg01{n}{t\,y}dt.
\end{equation}
Surely, $\cH^{k,0}_{n}(x,y)=\cH^{k}_{n}(x,y)$. 
These differentiations are straightforward:
\begin{align} \label{eq:difx}
\frac{\partial}{\partial x} \cH^{k}_n(x,y) = & \, x^k\exxp{-x} \, \hpgg01{n}{xy},\\
\label{eq:dify}
\frac{\partial}{\partial y} \cH^{k,\ell}_n(x,y) = & \, \frac{1}{n}\, \cH^{k+1,\ell}_{n+1}(x,y).
\end{align}
\begin{lemma} \label{lm:hkn}
If $k>0$, then
\begin{align}  \label{eq:rec3}
\cH^k_{n-1}(x,y) = & \, \cH^k_n(x,y) +\frac{y}{n(n-1)}\,\cH^{k+1}_{n+1}(x,y), \\
  \label{eq:recip}
k \, \cH^{k-1}_n(x,y) = & \, \cH^k_n(x,y) -\frac{y}{n}\,\cH^{k}_{n+1}(x,y)
 +x^k \exxp{-x} \, \hpgg01{n}{xy}, \\ % \quad \mbox{if } k>0. \\
 \label{eq:rechd}
k\,\cH^{k-1}_{n-1}(x,y)= & \, \frac{n\!-\!y\!-\!1}{n-1}\, \cH^k_n(x,y)
+\frac{y}{n(n\!-\!1)}\,\cH^{k+1}_{n+1}(x,y)  
%\nonumber \\ & 
+x^k \exxp{-x} \hpgg01{n\!-\!1}{xy}.
\end{align}
\end{lemma}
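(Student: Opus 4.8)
The plan is to reduce all three identities of Lemma~\ref{lm:hkn} to two elementary facts: the standard three-term contiguous relation for the ${}_0F_1$ series, and integration by parts inside the defining integral~(\ref{eq:hkndef}), using the differentiations~(\ref{eq:difx})--(\ref{eq:dify}).

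First I would record the contiguity
\[ \hpgg01{n-1}{z}-\hpgg01{n}{z}=\frac{z}{n(n-1)}\,\hpgg01{n+1}{z}, \]
which is verified by comparing the coefficients of $z^k$ on the two sides: both equal $1/\big((n-1)_{k+1}\,(k-1)!\big)$, using $(n-1)_k\,(n+k-1)=(n-1)_{k+1}$. (Alternatively one can derive it from the hypergeometric equation $z\,Y''+n\,Y'-Y=0$ satisfied by $\hpgg01{n}{z}$ together with $\frac{d}{dz}\hpgg01{n}{z}=\frac1n\,\hpgg01{n+1}{z}$.) Setting $z=ty$, multiplying by $t^k\exxp{-t}$ and integrating over $[0,x]$ converts the three series into $\cH^k_{n-1}$, $\cH^k_n$ and $\cH^{k+1}_{n+1}$, which is exactly~(\ref{eq:rec3}); this step needs no restriction on $k$.

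Next, for~(\ref{eq:recip}) I would start from $\frac{d}{dt}\big[t^k\,\hpgg01{n}{ty}\big]=k\,t^{k-1}\,\hpgg01{n}{ty}+\frac{y}{n}\,t^k\,\hpgg01{n+1}{ty}$, multiply by $\exxp{-t}$, and integrate over $[0,x]$, integrating the left-hand side by parts against $\exxp{-t}$. This gives
\[ \Big[\exxp{-t}\,t^k\,\hpgg01{n}{ty}\Big]_0^x+\cH^k_n(x,y)=k\,\cH^{k-1}_n(x,y)+\frac{y}{n}\,\cH^k_{n+1}(x,y). \]
The boundary term at $t=0$ vanishes precisely because $k>0$ (this is where the hypothesis is used), and at $t=x$ it equals $x^k\exxp{-x}\hpgg01{n}{xy}$; rearranging yields~(\ref{eq:recip}).

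Finally, (\ref{eq:rechd}) should follow formally: apply~(\ref{eq:recip}) with $n$ replaced by $n-1$, substitute $\cH^k_{n-1}(x,y)=\cH^k_n(x,y)+\frac{y}{n(n-1)}\,\cH^{k+1}_{n+1}(x,y)$ from~(\ref{eq:rec3}), and collect the coefficient $1-\frac{y}{n-1}=\frac{n-y-1}{n-1}$ of $\cH^k_n(x,y)$. I do not expect a genuine obstacle anywhere in the lemma; the only item requiring an actual computation is the contiguous relation for ${}_0F_1$, and everything after that is bookkeeping.
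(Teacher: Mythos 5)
Your proposal is correct and follows essentially the same route as the paper: the ${}_0\mathrm{F}_1$ contiguous relation integrated against $t^k\exxp{-t}$ for (\ref{eq:rec3}), integration by parts against $\exxp{-t}$ (with the boundary term at $t=0$ killed by $k>0$) for (\ref{eq:recip}), and the shift $n\mapsto n-1$ of (\ref{eq:recip}) combined with (\ref{eq:rec3}) for (\ref{eq:rechd}). The only cosmetic difference is that you verify the contiguity by comparing series coefficients whereas the paper cites its equivalence to the hypergeometric equation; both are fine.
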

\begin{proof}
The first formula follows from the recurrence 
\begin{equation}
\hpgg01{n-1}{z}= \hpgg01{n}{z} +
\frac{z}{n(n-1)} \, \hpgg01{n+1}{z}
\end{equation}
that is equivalent to the hypergeometric equation (with $a=n$)% $z\,Y''(z)+n\,Y'(z)=Y(z)$ 
\begin{equation} \label{eq:hpgde}
z\,Y''(z)+a\,Y'(z)-Y(z)=0
% z\,\frc{d^2Y(z)}{dz^2}+n\,\frac{dY(z)}{dz}-Y(z)=0
\end{equation}
for $\hpgo01(z)$. The second formula follows after integration by parts  
\begin{equation}
\cH_n^{k}(x,y)=-\!\int_0^x  t^k  \hpgg01{n}{ty} d\exxp{-t}.
\end{equation}
The last formula follows after substituting $n\mapsto n-1$ in (\ref{eq:recip})
and eliminating $\cH^k_{n-1}(x,y)$ using (\ref{eq:rec3}). 
\end{proof}
Formula (\ref{eq:rechd}) is equivalent to the differential equation
\begin{equation} \label{eq:diffhjk}
\Big( y \frac{\partial^2}{\partial y^2}+ (n-y) \frac{\partial}{\partial y}-k-1 \Big) \,
\cH^{k}_{n}(x,y) = -x^{k+1} \exxp{-x} \hpgg01{n}{xy}.
\end{equation}
We can obtain recurrences that shift only the indices $k$ or $n$,
presented in the following lemma.
Remarkably, both formulas lose an $\cH$-term when $k=n-1$.
The simplified formulas are readily applicable to the $j=m$, $j=m-1$ columns in (\ref{eq:cdfdef}).
\begin{lemma} \label{lm:hkn2}
If $k>0$, then
\begin{align} 
\hspace{-6pt} n(n\!-\!1) \cH^{k}_{n-1}(x,y) = & \, n\,(y+n-1)\cH^k_n(x,y)+
 y\,(k-n+1)\cH^k_{n+1}(x,y) \nonumber \\
 & -yx^{k+1} \exxp{-x} \, \hpgg01{n+1}{xy},\\
\cH^{k+1}_{n}(x,y)= & \, 
(y-n+2k+2)\cH^k_n(x,y)+k\,(n-k-1)\cH^{k-1}_{n}(x,y) \nonumber \\
& \! -(n\!-\!1)x^k \exxp{-x}\hpgg01{n\!-\!1}{xy}+(k\!-\!x)x^k \exxp{-x} \hpgg01{n}{xy}.
 \end{align}
 \end{lemma}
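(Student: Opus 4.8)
The plan is to derive both recurrences by elementary linear elimination among the relations of Lemma~\ref{lm:hkn} together with the hypergeometric three-term recurrence $\hpgg01{n-1}{z}=\hpgg01{n}{z}+\frac{z}{n(n-1)}\hpgg01{n+1}{z}$ (with $z=xy$) that appeared in its proof. The one non-mechanical ingredient is an auxiliary ``diagonal-lowering'' identity, which I will call $(\ast)$:
\[
\cH^{k+1}_{n+1}(x,y)=n\,\cH^{k}_{n}(x,y)+(k-n+1)\,\cH^{k}_{n+1}(x,y)-x^{k+1}\exxp{-x}\hpgg01{n+1}{xy},\qquad k>0.
\]
To prove $(\ast)$ I would apply (\ref{eq:recip}) with $k\mapsto k+1$ and $n\mapsto n+1$, obtaining $\cH^{k+1}_{n+1}$ in terms of $\cH^{k}_{n+1}$ and $\cH^{k+1}_{n+2}$, and then eliminate $\cH^{k+1}_{n+2}$ using (\ref{eq:rec3}) with $n\mapsto n+1$, i.e.\ $\cH^{k}_{n}=\cH^{k}_{n+1}+\frac{y}{n(n+1)}\cH^{k+1}_{n+2}$. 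Collecting coefficients gives $(\ast)$.

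Given $(\ast)$, the first identity of the lemma is one line: (\ref{eq:rec3}) reads $n(n-1)\cH^{k}_{n-1}=n(n-1)\cH^{k}_{n}+y\,\cH^{k+1}_{n+1}$, and substituting $(\ast)$ together with $n(n-1)+ny=n(y+n-1)$ produces exactly the stated formula. For the second (the ``pure $k$'' recurrence) I would start from (\ref{eq:recip}) with $k\mapsto k+1$, solved as $\cH^{k+1}_{n}=(k+1)\cH^{k}_{n}+\frac{y}{n}\cH^{k+1}_{n+1}-x^{k+1}\exxp{-x}\hpgg01{n}{xy}$; replace $\cH^{k+1}_{n+1}$ by $(\ast)$; and then use (\ref{eq:recip}) once more in the form $\frac{y}{n}\cH^{k}_{n+1}=\cH^{k}_{n}-k\,\cH^{k-1}_{n}+x^{k}\exxp{-x}\hpgg01{n}{xy}$ to remove the leftover $\cH^{k}_{n+1}$ term. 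The coefficients of $\cH^{k}_{n}$ and $\cH^{k-1}_{n}$ then come out as $y-n+2k+2$ and $k(n-k-1)$, and the remaining elementary terms collapse, after a single use of the hypergeometric recurrence with $z=xy$, to $-(n-1)x^{k}\exxp{-x}\hpgg01{n-1}{xy}+(k-x)x^{k}\exxp{-x}\hpgg01{n}{xy}$.

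There is no real obstacle once $(\ast)$ is written down: everything else is bookkeeping of rational coefficients. The point of $(\ast)$ is that it trades a simultaneous $(k,n)\mapsto(k+1,n+1)$ shift for a shift in $n$ alone, which is exactly what decouples the two directions of index motion and makes a recurrence shifting only $k$ possible. A convenient consistency check built into the derivation is that the coefficient $y(k-n+1)$ of $\cH^{k}_{n+1}$ in the first formula and the coefficient $k(n-k-1)$ of $\cH^{k-1}_{n}$ in the second both vanish at $k=n-1$, matching the remark preceding the lemma that an $\cH$-term is lost in that case.
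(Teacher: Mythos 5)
Your proof is correct and follows essentially the same route as the paper: both formulas are obtained by linear elimination among the recurrences of Lemma~\ref{lm:hkn} (together with the three-term $\hpgo01$ recurrence), and I have checked that your auxiliary identity $(\ast)$ and the subsequent coefficient bookkeeping work out exactly as claimed. The only difference is organizational --- the paper simply lists which shifted copies of (\ref{eq:rec3})--(\ref{eq:rechd}) to eliminate from, whereas you package the elimination through the intermediate identity $(\ast)$, which is a clean way to present the same computation.
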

\begin{proof}
The first formula is obtained by eliminating $\cH^{k-1}_{n}(x,y)$, $\cH^{k-1}_{n-1}(x,y)$
from these 3 equations: (\ref{eq:recip}), the shift $k\mapsto k-1$ of (\ref{eq:rec3}),
and the shift $n\mapsto n-1$ of (\ref{eq:recip}). 
For the second formula, we eliminate $\cH^{k}_{n+1}(x,y)$, $\cH^{k-1}_{n-1}(x,y)$,
$\cH^{k+1}_{n+1}(x,y)$ from these 4 equations: (\ref{eq:recip}), (\ref{eq:rechd}), 
the shift $k\mapsto k-1$ of (\ref{eq:rec3}), and the shift $k\mapsto k+1$ of (\ref{eq:recip}). 
\end{proof}
\begin{lemma} \label{th:simprec}
The following recurrences with two $\cH$-terms hold, for $n>0$:
\begin{align}  \label{eq:simrec1}
(n\!-\!1)\cH^{n-1}_{n-1}(x,y)= & \, (y+n-1)\cH^{n-1}_n(x,y)
-\frac{yx^{n}}{n} \exxp{-x} \, \hpgg01{n+1}{xy}, \\   
\label{eq:simrec2}
\cH^{n}_{n}(x,y)= & \, (y+n)\cH^{n-1}_n(x,y)
-\frac{yx^{n}}{n} \exxp{-x} \, \hpgg01{n\!+\!1}{xy}  \nonumber \\ &  
-x^{n} \exxp{-x} \,\hpgg01{n}{xy}, \\
\label{eq:hrecg}
\cH^{n}_{n+1}(x,y)= &\, n \, \cH^{n-1}_n(x,y)-x^n \exxp{-x} \hpgg01{n+1}{xy}.
\end{align}
\end{lemma}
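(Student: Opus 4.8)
The plan is to obtain all three recurrences by specializing the two formulas of Lemma \ref{lm:hkn2} at $k=n-1$, exploiting the degeneration noted just before that lemma (at this value of $k$ each of those formulas drops one $\cH$-term), and then to derive \eqref{eq:hrecg} by one additional elimination involving \eqref{eq:recip}.

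First I would put $k=n-1$ in the first formula of Lemma \ref{lm:hkn2}: the coefficient $y(k-n+1)$ of $\cH^{k}_{n+1}(x,y)$ vanishes and $x^{k+1}=x^{n}$, so dividing the surviving identity by $n$ reads off \eqref{eq:simrec1} immediately. Next I would put $k=n-1$ in the second formula of Lemma \ref{lm:hkn2}: the coefficient $k(n-k-1)$ of $\cH^{k-1}_{n}(x,y)$ vanishes, the coefficient $y-n+2k+2$ of $\cH^{n-1}_{n}(x,y)$ becomes $y+n$, and the remaining non-$\cH$ terms are
\[
-(n-1)\,x^{n-1}\exxp{-x}\hpgg01{n-1}{xy}+(n-1-x)\,x^{n-1}\exxp{-x}\hpgg01{n}{xy}.
\]
To bring this to the shape of \eqref{eq:simrec2} I would substitute the ${}_0F_1$ contiguous relation already invoked in the proof of Lemma \ref{lm:hkn}, namely $\hpgg01{n-1}{z}=\hpgg01{n}{z}+\frac{z}{n(n-1)}\hpgg01{n+1}{z}$ with $z=xy$: the $\hpgg01{n}{xy}$ pieces combine with total coefficient $-(n-1)+(n-1-x)=-x$, giving $-x^{n}\exxp{-x}\hpgg01{n}{xy}$, and the $\hpgg01{n+1}{xy}$ piece becomes $-\frac{yx^{n}}{n}\exxp{-x}\hpgg01{n+1}{xy}$, which is precisely \eqref{eq:simrec2}.

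For \eqref{eq:hrecg} I would specialize \eqref{eq:recip} at $k=n$, i.e.
\[
n\,\cH^{n-1}_{n}(x,y)=\cH^{n}_{n}(x,y)-\frac{y}{n}\,\cH^{n}_{n+1}(x,y)+x^{n}\exxp{-x}\hpgg01{n}{xy},
\]
and eliminate $\cH^{n}_{n}(x,y)$ via the just-established \eqref{eq:simrec2}: the $x^{n}\exxp{-x}\hpgg01{n}{xy}$ terms cancel, the $\cH^{n-1}_{n}$ terms collect, and a common factor $y$ appears on both sides, which cancels to leave \eqref{eq:hrecg}. None of this presents a real obstacle; the only point needing a word of care is that final cancellation of $y$, which is legitimate since the identity holds for all $y$ and both sides are analytic, hence the cancellation takes place in the integral domain of analytic functions on the relevant domain (equivalently, cancel for $y\neq 0$ and extend to $y=0$ by continuity). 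For $n=1$ the statements should be read only where the constituent $\cH$-functions are defined; in the range $n\geq 2$ relevant to the determinantal formula \eqref{eq:cdfdef} every appeal to Lemmas \ref{lm:hkn}--\ref{lm:hkn2} is within their hypothesis $k=n-1>0$.
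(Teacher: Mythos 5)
Your proof is correct and, for \eqref{eq:simrec1} and \eqref{eq:simrec2}, it is exactly the paper's route: both are the specialization $k=n-1$ of Lemma \ref{lm:hkn2} (the paper leaves implicit the contiguous-relation step $\hpgg01{n-1}{xy}=\hpgg01{n}{xy}+\frac{xy}{n(n-1)}\hpgg01{n+1}{xy}$ needed to match the stated form of \eqref{eq:simrec2}, which you carry out explicitly and correctly). For \eqref{eq:hrecg} you take a slightly different elimination: you combine \eqref{eq:simrec2} with \eqref{eq:recip} at $k=n$ and cancel a factor of $y$, whereas the paper combines \eqref{eq:simrec2} with the shift $n\mapsto n+1$ of \eqref{eq:simrec1}, which entails cancelling $(y+n)$ and one more contiguous relation; the two eliminations are equally routine and yield the same identity, and your justification of the cancellation of $y$ by analyticity is adequate. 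Your closing caveat about $n=1$ is well taken (the constituent functions $\cH^{0}_{0}$ and $\hpgg01{0}{\cdot}$ are ill-defined there, and the paper's own derivation is likewise confined to $k=n-1>0$), and the restriction to $n\ge 2$ is harmless for the application.
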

\begin{proof} 
The first two formulas constitute the special case $k=n-1$ of the previous lemma.
The third formula is obtained by eliminating $\cH^{n}_{n}(x,y)$ from (\ref{eq:simrec2})
and the shift $n\mapsto n+1$ of (\ref{eq:simrec1}). 
\end{proof}

Formula (\ref{eq:hrecg}) is comparable to the recurrence for the incomplete gamma function 
$\gamma(a,x)=\int_0^x  t^{a-1} \exxp{-t}\,dt$:
\begin{equation} \label{incgamma}
\gamma(a+1,z)= a \, \gamma(a,z)-x^a \exxp{-x}.
\end{equation}
The presented recurrences % of Lemma \ref{th:simprec}
can be used to express all matrix entries in (\ref{eq:cdfdef})
in terms of $\cH^{n-m}_{n-m+1}(x,\lamb_i)$ and two $\hpgo01$ functions.
% The simplified formulas are readily applicable to the $j=m$, $j=m-1$ columns in (\ref{eq:cdfdef}).
% and eventually, to all entries of the determinant defining 
% $\cF_{n,m}(x,\lamb_1,\ldots,\lamb_m)$ and $\cR_{n,m}(x,\lamb_1,\ldots,\lamb_m)$.
\begin{proposition} \label{lm:basis3}
Any function $\cH^{k}_{n}(x,y)$ with integer $n>1$ and $k\ge n-1$
% and the derivatives of it (of any order) 
can be expressed as a $\QQ(x,y)$-linear combination of
\[
\cH_N^{N-1}(x,y), \quad x^N\exxp{-x}\hpgg01{N}{xy}, \quad x^N\exxp{-x}\hpgg01{N+1}{xy}
\]
for any $N>1$.
The same statement applies to the derivatives of $\cH^{k}_{n}(x,y)$ of any order (with respect to $x,y$).
% linearly, with the coefficients in $\QQ(x,y)$.
\end{proposition}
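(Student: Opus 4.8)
The plan is to reduce everything to the three "base" functions by an induction that walks the indices $k$ and $n$ down to the distinguished diagonal $k=n-1$ and then stabilizes $n$. First I would observe that the three recurrences of Lemma~\ref{th:simprec} already settle the diagonal and near-diagonal cases: formula~(\ref{eq:simrec1}) expresses $\cH^{n-1}_{n-1}$ in terms of $\cH^{n-1}_n$ and an $\hpgo01$ term, formula~(\ref{eq:hrecg}) lets one raise $n$ along the line $k=n-1$ (or, read backwards, lower it), and formula~(\ref{eq:simrec2}) handles $\cH^n_n$. So the task is to show that an arbitrary $\cH^k_n$ with $k\ge n-1$ can be rewritten, modulo $\QQ(x,y)$-combinations of $x^N\exxp{-x}\hpgg01{N}{xy}$ and $x^N\exxp{-x}\hpgg01{N+1}{xy}$ terms, as a $\QQ(x,y)$-multiple of some $\cH^{N-1}_N$; and then, once on the diagonal, to move $N$ to the prescribed value using~(\ref{eq:hrecg}) together with the contiguous relation $\hpgg01{N-1}{z}=\hpgg01{N}{z}+\frac{z}{N(N-1)}\hpgg01{N+1}{z}$ (already used in the proof of Lemma~\ref{lm:hkn}) to re-index the $\hpgo01$ terms.

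The main inductive engine is the second recurrence of Lemma~\ref{lm:hkn2},
\[
\cH^{k+1}_n = (y-n+2k+2)\,\cH^k_n + k(n-k-1)\,\cH^{k-1}_n + (\text{$\hpgo01$ terms}),
\]
which, solved for $\cH^{k-1}_n$ when $k>0$ (note $k(n-k-1)$ is a nonzero element of $\QQ$ for $0<k$, $k\ne n-1$, which is exactly the regime we are pushing through, and the case $k=n-1$ is the target), expresses $\cH^{k-1}_n$ in terms of $\cH^k_n$, $\cH^{k+1}_n$ and $\hpgo01$ terms — i.e.\ it \emph{lowers} the upper index at fixed $n$. Running this downward from a given $(k,n)$ with $k>n-1$ produces, after finitely many steps, an expression in $\cH^{n-1}_n$, $\cH^n_n$ and $\hpgo01$ terms; then~(\ref{eq:simrec2}) eliminates $\cH^n_n$ in favour of $\cH^{n-1}_n$. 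This gives the representation for the fixed pair $N=n$; the first recurrence of Lemma~\ref{lm:hkn2} (or the simpler~(\ref{eq:hrecg}) once on the diagonal) then shifts $N$ to any desired value $>1$, again only at the cost of $\hpgo01$ terms, which are re-indexed by the contiguous relation above. A clean way to organize the induction is on the pair $(k,n)$ ordered lexicographically by $k$ then $n$, or simply by induction on $k-n+1\ge 0$ with $n$ as an inner parameter.

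For the derivative statement I would argue that the $\QQ(x,y)$-span of the set
\[
\mathcal B=\{\,\cH^{N}_{N+1}(x,y)\ (\text{equivalently }\cH^{N-1}_N),\ x^N\exxp{-x}\hpgg01{N}{xy},\ x^N\exxp{-x}\hpgg01{N+1}{xy}\,\}
\]
is closed under $\partial/\partial x$ and $\partial/\partial y$. Closure under $\partial/\partial x$ is immediate: (\ref{eq:difx}) gives $\partial_x \cH^{N-1}_N = x^{N-1}\exxp{-x}\hpgg01{N}{xy}$, and $\partial_x$ of the exponential-hypergeometric terms produces only terms of the same shape after using the hypergeometric ODE~(\ref{eq:hpgde}) to rewrite $\hpgg01{N}{xy}'$; all resulting index shifts are absorbed by the contiguous relation. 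Closure under $\partial/\partial y$ uses~(\ref{eq:dify}): $\partial_y\cH^{N-1}_N=\frac1N\cH^N_{N+1}$, which lies on the diagonal $k=n-1$ and so is already in $\mathcal B$ up to $\hpgo01$ terms via~(\ref{eq:hrecg}); and $\partial_y$ of $\hpgg01{N}{xy}$ gives $\frac{x}{N}\hpgg01{N+1}{xy}$, again of the right form. Since any $\cH^k_n$ with $k\ge n-1$ lies in $\mathrm{span}_{\QQ(x,y)}\mathcal B$ by the first part, and this span is differentially closed, every derivative of it does too.

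The step I expect to be the genuine obstacle is \textbf{bookkeeping rather than conceptual}: making sure that each application of the Lemma~\ref{lm:hkn2} recurrence is legitimate (the coefficient $k(n-k-1)$ must be inverted, so one must verify the induction never tries to divide by it at $k=n-1$ — it does not, because $k=n-1$ is precisely where the recursion terminates on the diagonal rather than continuing) and that the proliferation of $x^N\exxp{-x}\hpgg01{N}{xy}$ terms with varying $N$ really does collapse into just the two advertised ones for a single chosen $N$. For the latter I would prove a small auxiliary fact first — that $x^{N+1}\exxp{-x}\hpgg01{N+1}{xy}$ and $x^{N+1}\exxp{-x}\hpgg01{N+2}{xy}$ are $\QQ(x,y)$-combinations of $x^N\exxp{-x}\hpgg01{N}{xy}$ and $x^N\exxp{-x}\hpgg01{N+1}{xy}$, and conversely — which follows from the contiguous relation for $\hpgo01$ and from the hypergeometric ODE~(\ref{eq:hpgde}) rewritten as $\hpgg01{N+1}{z}=\hpgg01{N}{z}+\ldots$ — and then the whole argument becomes a finite descent with a uniformly bounded number of such normalizations.
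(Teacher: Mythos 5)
Your proposal is correct and follows essentially the same route as the paper: Lemma~\ref{th:simprec} settles the diagonal cases $k=n-1$, $k=n$ (with (\ref{eq:hrecg}) shifting $N$), Lemma~\ref{lm:hkn2} handles larger $k$, and the differentiation rules (\ref{eq:difx})--(\ref{eq:dify}) give closure under derivatives. One small remark: the second recurrence of Lemma~\ref{lm:hkn2} already expresses $\cH^{k+1}_n$ in terms of $\cH^{k}_n$, $\cH^{k-1}_n$ and $\hpgo01$ terms, so the reduction from large $k$ down to the diagonal never requires inverting the coefficient $k(n-k-1)$ --- your worry about that division (and the phrase ``solved for $\cH^{k-1}_n$'') is a harmless misstatement of a step that works without it.
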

\begin{proof} Lemma \ref{th:simprec} proves the first claim for $k=n-1$ and $k=n$.
Lemma \ref{lm:hkn2} extends the statement to larger $k$. 
Differentiation rules (\ref{eq:difx})--(\ref{eq:dify}) imply the second claim.
\end{proof}

Recurrence relations  $\cH^{k,\ell}_{n}(x,y)$ are obtained by
a straightforward extension of the results for $\cH^{k}_{n}(x,y)$.
\begin{lemma} \label{th:hklnrecs}
For $k>0$, $\ell>0$ we have
\begin{align} \label{eq:hklnx}
x\,\cH^{k,\ell}_{n}(x,y)= &\, \cH^{k+1,\ell}_{n}(x,y)+\cH^{k,\ell+1}_{n}(x,y),\\
 \label{eq:hklnr}
\cH^{k,\ell}_{n-1}(x,y)= &\, \cH^{k,\ell}_{n}(x,y)+\frac{y}{n(n-1)}\,\cH^{k+1,\ell}_{n+1}(x,y), \\
 \label{eq:hklni}
\cH^{k,\ell}_{n}(x,y)= &\, k\,\cH^{k-1,\ell}_{n}(x,y)-\ell\,\cH^{k,\ell-1}_{n}(x,y)
+\frac{y}{n}\,\cH^{k,\ell}_{n+1}(x,y).
\end{align}
\end{lemma}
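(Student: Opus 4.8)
The plan is to mirror, almost verbatim, the derivations of the three basic recurrences for $\cH^k_n(x,y)$ (equations (\ref{eq:rec3}), (\ref{eq:recip}), (\ref{eq:hklnr})'s ancestor), now carrying the extra polynomial factor $(x-t)^\ell$ through the same manipulations. First I would establish (\ref{eq:hklnx}): this is purely algebraic, since inside the integral defining $\cH^{k,\ell}_n$ one writes $x\cdot t^k(x-t)^\ell = t^{k+1}(x-t)^\ell + t^k(x-t)^{\ell+1}$, using $x = t + (x-t)$, and splits the integral. No calculus is needed here at all.

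Next, for (\ref{eq:hklnr}) I would apply the hypergeometric three-term recurrence $\hpgg01{n-1}{z} = \hpgg01{n}{z} + \frac{z}{n(n-1)}\hpgg01{n+1}{z}$ (recalled in the proof of Lemma \ref{lm:hkn}, and equivalent to (\ref{eq:hpgde}) with $a=n$) with $z = ty$ inside the integrand $\int_0^x e^{-t} t^k (x-t)^\ell \hpgg01{n}{ty}\,dt$. The factor $z = ty$ produced on the right raises the $t$-power from $k$ to $k+1$ while leaving $(x-t)^\ell$ untouched, which yields exactly (\ref{eq:hklnr}). This is the direct analogue of (\ref{eq:rec3}) and requires no integration by parts.

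For (\ref{eq:hklni}) I would integrate by parts in the form used for (\ref{eq:recip}), differentiating $t^k(x-t)^\ell$ against $d(-e^{-t})$ — but note a difference: here one does \emph{not} pick up a boundary term of the shape $x^k e^{-x}\hpgg01{n}{xy}$, because the factor $(x-t)^\ell$ with $\ell>0$ vanishes at the upper endpoint $t=x$ and $t^k$ with $k>0$ vanishes at $t=0$. Differentiating the product gives $k\,t^{k-1}(x-t)^\ell - \ell\,t^k(x-t)^{\ell-1}$ for the polynomial part, while differentiating the $\hpgo01$ factor contributes, via $\frac{d}{dt}\hpgg01{n}{ty} = \frac{y}{n}\hpgg01{n+1}{ty}$ (the single-variable form of (\ref{eq:dify})), the term $\frac{y}{n}\cH^{k,\ell}_{n+1}(x,y)$. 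Collecting these three contributions is precisely (\ref{eq:hklni}).

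The only mild subtlety — and the one place to be careful rather than the "hard part" — is bookkeeping of the boundary terms in the integration by parts for (\ref{eq:hklni}): one must check that the hypotheses $k>0$ and $\ell>0$ are exactly what is needed to kill both endpoint contributions, so that (\ref{eq:hklni}) is clean, in contrast to the $\ell=0$ case (\ref{eq:recip}) which retains the $x^k e^{-x}\hpgg01{n}{xy}$ term. Everything else is a routine transcription of Lemma \ref{lm:hkn}'s proof with the inert factor $(x-t)^\ell$ along for the ride.
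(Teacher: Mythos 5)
Your proposal is correct and follows essentially the same route as the paper: (\ref{eq:hklnx}) by the algebraic split $x=t+(x-t)$ in the integrand (the paper writes the equivalent split $(x-t)^\ell=x(x-t)^{\ell-1}-t(x-t)^{\ell-1}$), (\ref{eq:hklnr}) from the three-term $\hpgo01$ recurrence, and (\ref{eq:hklni}) by integration by parts. Your explicit remark that $k>0$ and $\ell>0$ kill both boundary terms is a correct and welcome detail the paper leaves implicit.
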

\begin{proof}
The first recurrence is obtained by splitting 
\[
(x-t)^\ell=x(x-t)^{\ell-1}-t(x-t)^{\ell-1}
\]
in the defining integral (\ref{eq:hkln}).
The other two equations follow similarly as (\ref{eq:rec3})--(\ref{eq:recip}),
from the three-term recurrence for the $\hpgo01$ function and,
respectively, by integration by parts.
\end{proof}
\begin{lemma}
For $k>0$, $\ell>0$ we have
\begin{align}
\label{eq:hklrecu}
\! (n\!-\!1)\cH^{k-1,\ell}_{n-1}(x,\!y)
= &\, \cH^{k,\ell}_{n}(x,\!y)+\ell\,\cH^{k,\ell-1}_{n}(x,\!y)+(n-k-1)\cH^{k-1,\ell}_{n}(x,\!y),\\
\label{eq:hklrecu2}
k\,\cH^{k-1,\ell}_{k}(x,y)
= &\, \cH^{k,\ell}_{k+1}(x,y)+\ell\,\cH^{k,\ell-1}_{k+1}(x,y).
\end{align}
\end{lemma}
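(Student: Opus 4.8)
The plan is to follow the pattern of the proofs of Lemma~\ref{lm:hkn2} and Lemma~\ref{th:simprec}: derive (\ref{eq:hklrecu}) by a one-step elimination among the recurrences of Lemma~\ref{th:hklnrecs}, and then read off (\ref{eq:hklrecu2}) as a degenerate special case. The feature to exploit is that, because $\ell>0$, the boundary contribution at $t=x$ in any integration by parts of the defining integral~(\ref{eq:hkln}) against $\exxp{-t}$ vanishes (the factor $(x-t)^\ell$ kills it), exactly as $k>0$ kills the $t=0$ endpoint. Consequently (\ref{eq:hklni}) already plays precisely the role that (\ref{eq:recip}) plays in the $\ell=0$ case, but \emph{without} a $\hpgo01$ correction term, so no genuinely new integral identity is needed.

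Concretely, I would start from (\ref{eq:hklnr}) with the shift $k\mapsto k-1$,
\[
\cH^{k-1,\ell}_{n-1}(x,y)=\cH^{k-1,\ell}_{n}(x,y)+\frac{y}{n(n-1)}\,\cH^{k,\ell}_{n+1}(x,y),
\]
and eliminate the raised term $\cH^{k,\ell}_{n+1}(x,y)$ by solving (\ref{eq:hklni}) for $\tfrac{y}{n}\,\cH^{k,\ell}_{n+1}(x,y)=\cH^{k,\ell}_{n}(x,y)-k\,\cH^{k-1,\ell}_{n}(x,y)+\ell\,\cH^{k,\ell-1}_{n}(x,y)$. Substituting, multiplying through by $n-1$, and merging the two $\cH^{k-1,\ell}_{n}$ contributions into a single coefficient $(n-1)-k$ gives exactly (\ref{eq:hklrecu}). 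Then (\ref{eq:hklrecu2}) is nothing but the specialization $n=k+1$ of (\ref{eq:hklrecu}): the coefficient $n-k-1$ of $\cH^{k-1,\ell}_{n}$ vanishes, which is the same mechanism that produces the shortened two-$\cH$-term relations of Lemma~\ref{th:simprec} at $k=n-1$, and it is worth recording separately precisely because that vanishing is what makes such relations directly usable.

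I do not expect a genuine obstacle; the only care required is tracking hypotheses. One needs $k>0$ so that (\ref{eq:hklni}) and the $k\mapsto k-1$ shift of (\ref{eq:hklnr}) are applicable, $\ell>0$ so that $\cH^{k,\ell-1}_{n}$ is an honest instance of (\ref{eq:hkln}) (and carries no boundary term), and $n>1$ so that the factors $1/n$ and $1/(n(n-1))$ are finite — all automatic in the stated ranges, with $n-1=k>0$ in the case (\ref{eq:hklrecu2}). Unwinding Lemma~\ref{th:hklnrecs}, the identity ultimately rests only on the three-term recurrence for $\hpgo01$ and a single integration by parts of $\cH^{k,\ell}_{n}(x,y)$ in which \emph{both} endpoint contributions vanish — by $k>0$ at $t=0$ and by $\ell>0$ at $t=x$ — which is exactly why no $\hpgo01$ remainder term survives.
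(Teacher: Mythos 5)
Your derivation is correct, and it is a genuinely shorter elimination than the one in the paper, though it draws on the same toolbox (Lemma~\ref{th:hklnrecs}). The paper first proves an auxiliary four-term identity, namely
\begin{equation*}
\cH^{k,\ell+1}_{n-1}+\frac{y}{n(n-1)}\,\cH^{k+2,\ell}_{n+1}
=\cH^{k,\ell+1}_{n}+\frac{xy}{n(n-1)}\,\cH^{k+1,\ell}_{n+1},
\end{equation*}
by combining (\ref{eq:hklnr}) (twice) with (\ref{eq:hklnx}) (twice), and only then shifts indices, subtracts (\ref{eq:hklni}) and invokes (\ref{eq:hklnx}) once more to reach (\ref{eq:hklrecu}). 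You instead take the $k\mapsto k-1$ shift of (\ref{eq:hklnr}),
$\cH^{k-1,\ell}_{n-1}=\cH^{k-1,\ell}_{n}+\tfrac{y}{n(n-1)}\,\cH^{k,\ell}_{n+1}$,
solve (\ref{eq:hklni}) for $\tfrac{y}{n}\,\cH^{k,\ell}_{n+1}=\cH^{k,\ell}_{n}-k\,\cH^{k-1,\ell}_{n}+\ell\,\cH^{k,\ell-1}_{n}$, substitute, and multiply by $n-1$; the two $\cH^{k-1,\ell}_{n}$ terms combine into $(n-k-1)\cH^{k-1,\ell}_{n}$ and (\ref{eq:hklrecu}) drops out in one line, with (\ref{eq:hklrecu2}) as the $n=k+1$ specialization exactly as in the paper. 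Your route never touches (\ref{eq:hklnx}) and avoids the intermediate identity entirely, which makes it cleaner; what it perhaps loses is only the paper's systematic ``eliminate all terms with denominators'' bookkeeping, which generalizes mechanically to other index patterns. Your side remarks are also sound: the absence of a $\hpgo01$ remainder in (\ref{eq:hklni}) is indeed because $\ell>0$ kills the $t=x$ boundary term in the integration by parts, and the $k\mapsto k-1$ shift of (\ref{eq:hklnr}) is legitimate for $k\ge 1$ since that relation comes from the contiguous relation for $\hpgo01$ under the integral sign and needs no positivity of the exponent.
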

\begin{proof}
First we show this intermediate equation:
\begin{align} \label{eq:hklnt} \hspace{-5pt}
\cH^{k,\ell+1}_{n-1}(x,\!y)+\frac{y}{n(n\!-\!1)}\,\cH^{k+2,\ell}_{n+1}(x,\!y) =
\cH^{k,\ell+1}_{n}(x,\! y) +\frac{xy}{n(n\!-\!1)}\,\cH^{k+1,\ell}_{n+1}(x,\!y). 
\end{align}
It is annihilated by the relations of Lemma \ref{th:hklnrecs} as follows.
The two terms with denominators are eliminated by (\ref{eq:hklnr}) and its shift $k\mapsto k+1$.
Then elimination of $\cH^{k,\ell}_{n}(x,y)$, $\cH^{k,\ell}_{n-1}(x,y)$
by (\ref{eq:hklnx}) and its shift $n\mapsto n-1$ leaves no terms.
%Consider the linear combination of (\ref{eq:hklnr}) and its shift $k\mapsto k+1$,
%suggested by the two terms with denominators in (\ref{eq:hklnt}).
%Elimination of $\cH^{k,\ell}_{n}(x,y)$, $\cH^{k,\ell}_{n-1}(x,y)$
%by using (\ref{eq:hklnx}) and its shift $n\mapsto n-1$ gives (\ref{eq:hklnt}).

Now multiply equation (\ref{eq:hklnt}) by $(n-1)$ and apply the shifts $k\mapsto k-1$, $\ell\mapsto \ell-1$.
Then subtract (\ref{eq:hklni}) and eliminate the terms with denominators using
the shifted version $n\mapsto n+1$, $\ell\mapsto \ell-1$ of (\ref{eq:hklnx}).
The result is (\ref{eq:hklrecu}).
The second claimed recurrence is the special case  $n=k+1$ of  the first one.
\end{proof}

\section{Proofs and analysis}
\label{sec:holonomic}

The motivation for this article was potential application of the holonomic gradient method 
\cite{HNTT2013} to computation of the probability density function
$\cf_{n,m}(x,\lamb_1,\ldots,\lamb_m)$.
Our main results are formulated in \S \ref{sec:main}
for the function $\cR_{n,m}(x,\lamb_1,\ldots,\lamb_m)$
as in (\ref{eq:kanafz}). The obtained holonomic systems 
are more complicated than expected, in the simplest cases $m\le 3$ as well. 
As mentioned at the end of \S \ref{sec:conj}, 
application of the holonomic gradient method to the entries 
of the conjectured matrix in (\ref{eq:conj}) should be more effective
than employment of multi-variate holonomic systems.

This section proves the main results presented in \S \ref{sec:main}.
Additionally, \S \ref{sec:holos} discusses the obtained holonomic systems,
and \S \ref{sec:solrec} presents explicit solutions and recurrences relevant to Conjecture \ref{th:conjecture}.

% We are interested in differential operators annihilating $\cf_{n,m}(x,\lamb_1,\ldots,\lamb_m)$, 
% or rather equivalently, the function $\cR_{n,m}(x,\lamb_1,\ldots,\lamb_m)$ in (\ref{eq:defr}).

\subsection{Proof of Theorem \ref{th:difops}}
\label{sec:difops}
 
The product $\TT_1[\lamb_1]\ldots\TT_1[\lamb_m]$
annihilates $\cR_{n,m}(x,\lamb_1,\ldots,\lamb_m)$ 
because the operators $\TT_1[\lamb_i]=\LL_{n,m}[\lamb_i]$ annihilate
the front factor $\hpgg01{n-m+1}{x\lamb_i}$ of each term in (\ref{eq:defr}).
Remarkably, the other operators annihilate each term in (\ref{eq:defr})
as well. %rather similarly.

Consider now the action of $\TT_m[y]=\PP_{n,m}[y]$. We have
\begin{align} \label{eq:tmonr}
\TT_m[\lamb_1]\ldots& \TT_m[\lamb_m]\,\cR_{n,m}(x,\lamb_1,\ldots,\lamb_m)= \\ 
& \exxp{-x} \, \sum_{k=1}^m \TT_m[\lamb_k] \hpgg01{n\!-\!m\!+\!1}{x\lamb_k} 
\det \! \left(   \begin{array}{@{}c@{\;}c@{}}
\TT_m[\lamb_i]\cH^{n-j}_{n-m+1}(x,\lamb_i) & \rangle^{i\leq m}_{i\neq k} \\
x^{n-j}  & \rangle_{i=k} \end{array} \right).   \nonumber
\end{align}
We claim that all $m$ determinants are zero,
because the matrices have a specific kernel vector
\begin{equation}
\left( \, {\textstyle {m-1\choose j-1} } (-x)^j \; \big\rangle_{j=1}^{m} \,\right)^{\!T}.
\end{equation}
The scalar product of this vector with the $i=k$ rows 
\begin{equation} \label{eq:xkrows}
\left( x^{n-1}, x^{n-2}, \ldots, x^{n-m} \right)^T
\end{equation}
equals $0$ straightforwardly, since $\sum_{j=1}^m (-1)^j {m-1\choose j-1}=0$ as well known.
We want to show
\begin{equation} \label{eq:vectrel}
\sum_{j=1}^m {m-1\choose j-1} (-x)^{j} \;
  \TT_m[y]\cH^{n-j}_{n-m+1}(x,y)=0.
\end{equation} 
By applying the differentiation 
\begin{equation}
 \TT_m[y]= \Big(\dif{y}-1\Big) \LL_{n,m}[y]-m\,\dif{y}
\end{equation}
and recurrence % on $\cH^{k}_{n}(x,y)$, 
of Lemma \ref{lm:hkn},
\begin{align*}
\TT_m[y] \cH^{n-j}_{n-m+1}(x,y) = 
& \,  x\,\cH^{n-j}_{n-m+1}(x,y)-\frac{x+j-1}{n-m+1} \,\cH^{n-j+1}_{n-m+2}(x,y) 
\nonumber \\
&  - x^{n-j+2} \exxp{-x} \hpgg01{n-m+2}{x\,y}.
\end{align*}
We ignore the last term, for linear dependency with the $i=k$ row.
By permuting the summation and integration, our target is 
\begin{equation}
\int_0^x \! \exxp{-t} t^{n-m} (x-t)^{m-1} P(x,t) dt=0
\end{equation}
with 
\begin{equation}
P(x,t)=(x-t) \, \hpgg01{n-m+1}{y\,t} 
-\frac{t\,(x+m-t)}{n-m+1} \, \hpgg01{n-m+2}{y\,t}.
\end{equation}
This integral is equivalent to the recurrence relation 
\begin{equation} \label{eq:superh}
(n-m+1)\,\cH^{n-m,\,m}_{n-m+1}(x,y)
=\cH^{n-m+1,m}_{n-m+2}(x,y)+m\,\cH^{n-m+1,m-1}_{n-m+2}(x,y)
\end{equation}
that is equivalent to (\ref{eq:hklrecu}). %(\ref{eq:superh}).
The claimed relation (\ref{eq:vectrel}) follows.

Other products $\TT_q[\lambda_1]\cdots\TT_q[\lambda_m]$ with $2\le q <m$
annihilate $\cR_{n,m}(x,\lamb_1,\ldots,\lamb_m)$ similarly, with the kernel vectors 
\begin{equation} \label{eq:kervg}
\Big( \underbrace{0,\ldots,0}_{m-q},\,
{\textstyle {q-1\choose j-m+q-1} }(-x)^{j} \; \big\rangle_{j=m-q+1}^{m} \,\Big)^{\!T}
\end{equation}
of the $m$ matrices 
\[
\left(   \begin{array}{@{}c@{\;}c@{}}
\TT_q[\lamb_i]\cH^{n-j}_{n-m+1}(x,\lamb_i) & \rangle^{i\leq m}_{i\neq k} \\
x^{n-j}  & \rangle_{i=k} \end{array} \right)
\]
in an expression like in (\ref{eq:tmonr}).

For $\ell=1,\ldots,m$, the LCLM$(\TT_1[\lamb_\ell],\ldots,\TT_m[\lamb_\ell])$ transforms 
$\cR_{n,m}(x,\lamb_1,\ldots,\lamb_m)$ to
\begin{align*}
\exxp{-x} \! \sum_{k=1,\neq\ell}^m \! \hpgg01{n\!-\!m\!+\!1}{x\lamb_k} 
\det \! \left(   \begin{array}{@{}c@{\;}c@{}}
\cH^{n-j}_{n-m+1}(x,\lamb_i) & \rangle^{i\leq m}_{i\neq k,\ell} \\
\mbox{LCLM}\,\cH^{n-j}_{n-m+1}(x,\lamb_i) & \rangle_{i=\ell} \\
x^{n-j}  & \rangle_{i=k} \end{array} \right)\!.
\end{align*}
The $i=\ell$ row is proportional to the $i=k$ row vector (\ref{eq:xkrows}), because:
\begin{itemize}
\item For $q=2,\ldots,m$, the operator $\TT_q[\lambda_\ell]$
makes the $i=\ell$ row ``orthogonal" to (\ref{eq:kervg}).
\item The LCLM is a left factor of each $\TT_q[\lambda_\ell]$,
thus preserves
the ``orthogonality" property.
\item The vector (\ref{eq:xkrows}) is the only vector ``orthogonal" to 
the $m-1$ independent vectors.
\end{itemize}
Hence the LCLM operators annihilate all $m$ terms of 
$\cR_{n,m}(x,\lamb_1,\ldots,\lamb_m)$.

\subsection{Holonomic systems}
\label{sec:holos}

Here we prove Theorem \ref{th:hrank}. % Without going into 
To simplify technical details,
we posit that differential Galois theory \cite{PVdef} extends  
straightforwardly to the considered holonomic systems.

Let $\OO$ denote the system of differential operators in Theorem \ref{th:difops}
annihilating $\cR_{n,m}(x,\lamb_1,\ldots,\lamb_m)$.
It is a holonomic system, because the LCLM operators bound the order 
in each $\partial/\partial \lamb_i$. Since the rank of an LCLM operator equals $3m-1$,
a straightforward upper bound for the holonomic rank is $(3m-1)^m$.
After a choice of (Picard-Vessiot) solution basis for each $\TT_k[y]$,
the subsystem of LCLM operators has the following straightforward basis of solutions:
$g_{j_1}(\lamb_{1})\cdots g_{j_m}(\lamb_{m})$, where $g_k(y)$ is a basis solution
of $\TT_k$. Let $\MM$ denote this basis % of the LCLM operators.
of $(3m-1)^m$ functions.

The solution space of $\OO$ will be considered inside the span %solution space 
of $\MM$. The following $2m!\cdot 3^{m-1}$ functions in $\MM$ 
will be solutions of $\OO$: $g_1(\lamb_{j_1})\cdots g_m(\lamb_{j_m})$, where
$g_k(y)$ is a basis solution of $\TT_k[y]$,
and $(j_1,\ldots,j_m)$ is a permutation of $(1,\ldots,m)$.
Any other element of $\MM$ is not annihilated by at least one operator
in \refpart{i} of Theorem \ref{th:difops}, and a linear combination of these elements
will not be nullified by the same operator(s). % Hence the rank is $2m!\cdot 3^{m-1}$.
The claim \refpart{i} of Theorem \ref{th:hrank} follows.
 
The solution space of $\OO$ splits into a direct sum of $2\cdot 3^{m-1}$ subspaces 
that are invariant under the permutations of $\lamb_1,\ldots,\lamb_m$. 
Each of these subspaces gives one independent anti-symmetric solution,
and the claim \refpart{ii} % of Theorem \ref{th:hrank} 
follows.

Each operator $\TT_j[y]$ has logarithmic solutions at $y=0$.
A  broad reason is that appearance of $\hpgg01{n}{z}$ functions 
brings ill-determined $\hpgg01{2-n}{z}$. More precisely, % (and well known),  
logarithmic solutions appear in a limit $a\to n$ of the general solution % structure 
\begin{equation}
C'\,\hpgg01{a}{z}+C''\,z^{1-a}\,\hpgg01{2-a}{z}
\end{equation}
of the hypergeometric equation (\ref{eq:hpgde}) with generic $a\in\CC$.
Analysis of local solutions of $\TT_k[y]$ at the singularities $y=0$, $y=\infty$ shows
that the space of non-logarithmic solutions of $\TT_k[y]$ is one-dimensional for $k=1$
and two-dimensional for $k\ge 2$.  Explicit instances in \S \ref{sec:solrec} demonstrate this.
Similarly as above, the space of non-logarithmic solutions for $\OO$ has 
the dimension $2^{m-1}m!$, and the space of non-logarithmic anti-symmetric solutions
has the dimension $2^{m-1}$.

Existence of a holonomic system of rank $\le 3^{m}-1$ of claim \refpart{iv} follows from 
Proposition \ref{lm:basis3}. It allows to express $\cR_{n,m}(x,\lamb_1,\ldots,\lamb_m)$ 
and its derivatives as $\QQ(x,\lamb_1,\ldots,\lamb_m)$-linear combinations of 
$f_1(\lamb_1)\cdots f_m(\lamb_m)$, where each 
\begin{equation}
f_j(\lamb_j)\in \Big\{ \cH_n^{n-1}(x,\lambda_j), x^n\exxp{-x}\hpgg01{n}{x\lamb_j},
x^n\exxp{-x}\hpgg01{n+1}{x\lamb_j}\Big\}.
\end{equation}
These functions generate the space of dimension $3^m$, but the term 
\[
\cH_n^{n-1}(x,\lambda_1)\cdots \cH_n^{n-1}(x,\lambda_m)
\]
does not appear, because $\cR_{n,m}(x,\lamb_1,\ldots,\lamb_m)$ 
is defined after applying $\partial/\partial x$, 
and further differentiations will not bring this term back.
Examples of these linear expressions are given in \S \ref{sec:rank8}.

\begin{remark} \rm
The determinants in (\ref{eq:detconj}) linearly generate % \todo{Still have to think here.}
the space of anti-symmetric solutions in Theorem \ref{th:hrank} \refpart{ii}. 
They form a Grassmanian-like variety in this space.
Taking scalar multiplication of the rows and the whole matrix
into account, the dimension of this variety equals $(2-1)+(m-1)(3-1)+1=2m$.
Similarly, the subvariety of non-logarithmic anti-symmetric determinantal solutions 
has the dimension $(1-1)+\mbox{$(m-1)\cdot1$}+1=m$. 
For $m=2$, comparison with the dimension count in \refpart{iii} of Theorem \ref{th:hrank}
implies that a determinantal formula like (\ref{th:om2}) is inevitable.
\end{remark}

\begin{remark} \rm
In the proof of Theorem \ref{th:difops} we may start 
with any $m-1$ independent vectors $(v^{(q)}_1,\ldots,v^{(q)}_m)$
``orthogonal" to $(x,x^2,\ldots,x^m)^T$
and take for $\TT_2[y],\ldots,\TT_m[y]$ the operators annihilating
$\sum_{j=1}^m v^{(q)}_j\cH^{n-j}_{n-m+1}(x,y)$ up to a term proportional to (\ref{eq:xkrows}).
The alternative operators 
\[
\TT_k[\lambda_1]\cdots\TT_k[\lambda_m], \qquad
\mbox{LCLM}(\TT_1[\lamb_k],\ldots,\TT_m[\lamb_k])
\] 
would generate a holonomic system annihilating $\cR_{n,m}(x,\lamb_1,\ldots,\lamb_m)$ 
by the same reasons. They would have order 3 as well by Proposition \ref{lm:basis3},
but they would be more complicated, with additional singularities.
For example, taking $m=3$ and the vector $(1,-x,0)^T$ gives the differential operator
% \todo{Has to be checked.}
\begin{equation} \label{eq:altp3} %\QQ_3[y]= 
\PP_{n,3}[y]+\frac{x}{xy+(n-2)(x+2)}\left(
-y\frac{\partial^2}{\partial y^2}+y\frac{\partial}{\partial y}+x+2\right)
\end{equation}
instead of $\PP_{n-1,2}[y]$ corresponding to $(0,1,-x)^T$.
As shown in \S \ref{sec:m3}, the LCLM operators
are apparently the same as in Theorem \ref{th:difops},
demonstrating powerfully non-uniqueness of LCLM factorization 
in non-commutative Weyl algebras \cite{WikiWishart}. % (of differential operators).
But different products in Theorem \ref{th:difops}\refpart{i} lead
to different holonomic systems, of the same rank though. 
\end{remark}

\subsection{Proof of Theorem \ref{th:elimx2}}
\label{sec:elimx2}

We have to prove that 
\begin{equation} \label{eq:opxx}
x\dif{x}+\sum_{k=1}^m 
\Big( \lamb_k\diff{\lamb_k}{2}+(n-m+1-\lamb_k)\dif{\lamb_k} \Big)
\end{equation}
multiplies $\cR_{n,m}(x,\lamb_1,\ldots,\lamb_m)$ by the constant $mn-{m \choose 2}-1$. 
Note that only $\partial/\partial x$ in (\ref{eq:opxx}) splits the summation factors
in (\ref{eq:defr2}) by Leibniz rule, because the determinants do not depend
on the respective $\lamb_k$ in each summand of (\ref{eq:defr2}).

The action of (\ref{eq:opxx}) on the front factor $\exxp{-x}$ in (\ref{eq:defr2}) is multiplication by $-x$.
This is compensated by the action on the $\hpgo01$ factors in (\ref{eq:defr2}), because:
\begin{itemize}
\item $x\dif{x}-\lamb_k\dif{\lamb_k}$ nullifies $\hpgo01(x\lamb_k)$; % functions;
\item $\lamb_k\diff{\lamb_k}{2}+(n-m+1)\dif{\lamb_k}=\LL_{n-m}[\lamb_k]+x$.
\end{itemize}
The action on the determinants gives
\begin{align} \label{eq:difsum}
& \, \exxp{-x} \sum_{k=1}^m  \hpgg01{n\!-\!m\!+\!1}{x\lamb_k} 
\det \!  \left(   \begin{array}{@{}l@{\;}l@{}}
\cH^{n-j}_{n-m+1}(x,\lamb_i) & \rangle^{i\leq m}_{i\neq k} \\
\,(n\!-\!j)\,x^{n-j}  & \rangle_{i=k} 
\end{array}  \right)  \\
&+\exxp{-x} \! \mathop{\sum\sum}_{k,\ell=1,k\neq \ell}^m  \hpgg01{n\!-\!m\!+\!1}{x\lamb_k} 
\det \!  \left(   \begin{array}{@{}l@{\;}l@{}}
\cH^{n-j}_{n-m+1}(x,\lamb_i) & \rangle^{i\leq m}_{i\neq k,\ell} \\
(n\!-\!j\!+\!1)\,\cH^{n-j}_{n-m+1}(x,\lamb_\ell) & \rangle_{i=\ell} \\
\,x^{n-j}  & \rangle_{i=k} 
\end{array}  \right), \nonumber
\end{align}  
because:
\begin{itemize}
\item Applying $\partial/\partial x$ to the rows $i\neq k$ gives 
linear dependence with the $i=k$ row;
\item Formula (\ref{eq:diffhjk}) implies
\begin{align} \hspace{-8pt}
& \Big(  \lamb_\ell\diff{\lamb_\ell}{2}
+ (n-m+1+\lamb_\ell)\,\dif{\lamb_\ell} \Big) 
H_{n-m+1}^{n-j}(x,\lamb_\ell)  \nonumber \\
& \qquad =  (n-j+1) \,H_{n-m+1}^{n-j}(x,\lamb_\ell)  \, % \nonumber \\
 - x^{n-j+1}  \exxp{-x} \, \hpgg01{n-m+1}{x\,\lamb_\ell};
\end{align}
\item The new terms with $\hpgo01(x\lamb_\ell)$ can be ignored by linear 
combination with the $i=k$ row.
\end{itemize}
We split the factors $(n-j+1)$ in the rows $i=\ell$ % of the last summation 
into \mbox{$(n-j)$} and $(+1)$. The $(+1)$'s aggregate to 
multiplication of $ \cR_{n,m}(x,\lamb_1,\ldots,\lamb_m)$ by 
\mbox{$m(m-1)/m$}.
The modified summation (\ref{eq:difsum})
becomes a sum of $m$ special instances $c_j=n-j$
of the following lemma:
\begin{lemma} \label{th:cmatrix}
For any $m\times m$ matrix 
$\left( \begin{array}{@{}l@{\;}l@{}} a_{i,\,j} & \rangle_{i=1}^{m}  \end{array}  \right)$
and $m$ scalars $c_1,\ldots,c_m$ we have
\begin{align*}
\sum_{\ell=1}^m 
\det \!  \left(   \begin{array}{@{}l@{\;}l@{}}
a_{i,\,j} & \rangle^{i\leq m}_{i\neq \ell} \\
c_j\,a_{i,\,j}  & \rangle_{i=\ell} 
\end{array}  \right)
=\Big( \sum_{j=1}^m c_j \Big) \det \!  \Big(   \begin{array}{@{}l@{\;}l@{}}
a_{i,\,j} & \rangle_{i=1}^{m} 
\end{array}  \Big).
\end{align*}
\end{lemma}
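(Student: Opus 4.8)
\textbf{Proof proposal for Lemma \ref{th:cmatrix}.}

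The plan is to recognize the left-hand side as the result of applying a first-order differential (or, more elementarily, a ``weighted directional'') operator to the determinant, and to evaluate that operator by the Leibniz/multilinearity rule for determinants. Concretely, introduce a formal parameter $t$ and the scaled matrix with entries $t^{c_j}a_{i,j}$; its determinant is $t^{c_1+\cdots+c_m}\det(a_{i,j})$ by pulling the factor $t^{c_j}$ out of each column $j$. Differentiating $\det(t^{c_j}a_{i,j})$ with respect to $t$ and using multilinearity in the rows expands it as $\sum_{\ell=1}^m$ of the determinant in which row $\ell$ has been replaced by its $t$-derivative, i.e.\ row $\ell$ with entries $c_j\,t^{c_j-1}a_{i,j}$. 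Setting $t=1$ on both sides yields exactly the identity: the left side becomes $\sum_{\ell}\det(\,a_{i,j}\rangle_{i\neq\ell};\,c_j a_{i,j}\rangle_{i=\ell}\,)$, and the right side becomes $(c_1+\cdots+c_m)\det(a_{i,j})$.

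First I would state the column-scaling identity $\det(t^{c_j}a_{i,j})=t^{\sum_j c_j}\det(a_{i,j})$, valid over $\QQ(t)$ (or for $t>0$ if one prefers to stay with real parameters, or simply by treating the $c_j$ as nonnegative integers and then invoking polynomial identity, since the lemma is a polynomial identity in the $a_{i,j}$ and $c_j$). Next I would differentiate the left side using the standard rule that the derivative of a determinant is the sum over rows of the determinant with that row differentiated — here the $t$-dependence of row $i$ is entrywise $t^{c_j}a_{i,j}$, whose $t$-derivative is $c_j t^{c_j-1}a_{i,j}$. Then I would differentiate the right side by the product rule to get $\big(\sum_j c_j\big)t^{\sum_j c_j-1}\det(a_{i,j})$. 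Finally, evaluating at $t=1$ matches the two sides term by term.

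Alternatively, and perhaps cleaner for the paper, one can avoid the parameter entirely: expand each summand on the left by cofactors along its row $\ell$, getting $\sum_{\ell}\sum_j c_j a_{\ell,j}A_{\ell,j}$ where $A_{\ell,j}$ is the $(\ell,j)$ cofactor of $(a_{i,j})$; swapping the order of summation gives $\sum_j c_j\big(\sum_\ell a_{\ell,j}A_{\ell,j}\big)=\sum_j c_j\det(a_{i,j})$ since $\sum_\ell a_{\ell,j}A_{\ell,j}$ is precisely the cofactor expansion of $\det(a_{i,j})$ along column $j$. I expect no real obstacle here; the only point requiring a word of care is making sure the cofactor $A_{\ell,j}$ of the modified matrix (with row $\ell$ scaled) coincides with that of the original matrix, which is immediate because the cofactor $A_{\ell,j}$ does not involve row $\ell$ at all. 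I would present the cofactor-expansion argument as the main proof and perhaps mention the parametric viewpoint in a sentence.
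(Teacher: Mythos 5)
Your main (cofactor-expansion) argument is correct and is essentially the same elementary computation as the paper's proof: the paper expands the determinant into its $m!$ permutation terms and observes that, across the $m$ summands on the left, each such term picks up the factors $c_1,\dots,c_m$ exactly once each, hence the total factor $\sum_j c_j$; you organize the identical bookkeeping via Laplace expansion along the modified row $\ell$ followed by an interchange of the sums over $\ell$ and $j$, using that the cofactor $A_{\ell,j}$ is unaffected by scaling row $\ell$. Both are one-line multilinearity arguments and neither has a gap. Your parametric variant --- differentiating $\det(t^{c_j}a_{i,j})=t^{\sum_j c_j}\det(a_{i,j})$ at $t=1$ --- is a genuinely different and somewhat more conceptual framing (the left side is literally the logarithmic-derivative/trace formula for a diagonal column scaling), at the mild cost of the caveat you already note about non-integer exponents $c_j$; it is not what the paper does, but it would serve equally well.
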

\begin{proof}
Each of the $m!$ expanded terms of $\det \big( a_{i,j} \big)$
gets multiplied by $c_1,\ldots,c_m$ among the $m\cdot m!$ expanded terms
on the  left-hand side.
\end{proof}
In conclusion, (\ref{eq:opxx}) multiplies $\cR_{n,m}(x,\lamb_1,\ldots,\lamb_m)$ by
\begin{equation*}
m-1+\sum_{j=1}^m (n-j) = m-1+mn-{m+1\choose 2}=mn-{m\choose 2}-1.
\end{equation*}

\subsection{Solutions and recurrences}
\label{sec:solrec}

% The equation $\LL_2[\lamb_1]Y=0$ has the solution $Y_1(x,\lamb_1)=\hpgg01{n-1}{x\lamb_1}$.
Here we are concerned with solving the differential equations $\PP_{N,M}[y]Y=0$ 
for $M\ge 2$.
\begin{lemma}
Let $N$ denote a positive integer, and let 
\begin{align*}
\LQ_2= &\, y-x+N-1, \\ % as in $(\ref{eq:ll})$.
\LQ_3= &\,\LQ_2^2+2x-N+1, \\
\LQ_4= &\,\LQ_2^3+3\,(2x-N+1)\,(\LQ_2-1)-N+1.
\end{align*}
\begin{itemize}
\item A solution of $\PP_{N,N-2}[y]Y=0$ is %given in $(\ref{eq:solm2})$.
\begin{align} \label{eq:solm2}
Y_{N,2}(x,y) = & \, N\, \hpgg01{N}{xy}+y \,\hpgg01{N+1}{xy} +\LQ_2 \, \exxp{y}
\, \cH_{N+1}^0(y,x). 
\end{align} 
\item A solution of $\PP_{N,N-3}[y]Y=0$ is 
\begin{align} \label{eq:solm3}
Y_{N,3}(x,y) = \, & N\,(\LQ_2-2)\,\hpgg01{N}{xy}
+y\,(\LQ_2-1)\,\hpgg01{N\!+\!1}{xy} \quad \nonumber \\ & 
\!+L_3 \, \exxp{y}\, \cH_{N+1}^0(y,x). 
\end{align}
\item A solution of $\PP_{N,N-4}[y]Y=0$ is 
\begin{align} \label{eq:solm4}
Y_{N,4}(x,y) = \, & \,N\,\big((\LQ_2-2)^2+2y+2x+2\big)\,\hpgg01{N}{xy}  \nonumber \\
& \! +y\,\big((\LQ_2-1)^2+y+3x-N+2\big)\,\hpgg01{N+1}{xy}   \quad \nonumber  \\[1pt]
& \! +L_4 \, \exxp{y} 
\, \cH_{N+1}^0(y,x). % \int_0^y \! \exxp{-t} \, \hpgg01{n+1}{xt} dt. \nonumber
\end{align}
\end{itemize}
\end{lemma}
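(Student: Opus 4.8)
The plan is to verify directly that each proposed closed form $Y_{N,M}(x,y)$ with $M\in\{2,3,4\}$ is annihilated by the corresponding operator $\PP_{N,M}[y]$, by reducing the problem to the three-term recurrence and differentiation rules for the $\hpgo01$ function and for the integral $\cH^0_{N+1}(y,x)$. First I would record the building blocks: the hypergeometric equation $z\,Y''(z)+a\,Y'(z)-Y(z)=0$ from (\ref{eq:hpgde}), which upon writing $z=xy$ translates $y\,\partial_y^2+a\,\partial_y-x$ into an annihilator of $\hpgg01{a}{xy}$; the contiguous relation relating $\hpgg01{N}{xy}$, $\hpgg01{N+1}{xy}$, $\hpgg01{N+2}{xy}$; and the differentiation rule $\partial_y\cH^0_{N+1}(y,x)=0$ combined with $\partial_x$-type identities so that $\partial_y$ applied to $\exxp{y}\cH^0_{N+1}(y,x)$ produces $\exxp{y}\cH^0_{N+1}(y,x)$ plus a boundary term $-\exxp{y}\cdot\exxp{-y}\hpgg01{N+1}{xy}=-\hpgg01{N+1}{xy}$ coming from differentiating the upper limit in (\ref{eq:hkndef}) (after the substitution that swaps the roles of the variables). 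The key structural observation is that the operator $\PP_{N,M}[y]=(\partial_y-1)\LL_{M}[y]-m\,\partial_y$ (as already used in \S\ref{sec:difops} with $M=n-m$) factors through $\LL_M[y]$, so applying $\PP_{N,M}$ to a three-term combination $A\,\hpgg01{N}{xy}+B\,\hpgg01{N+1}{xy}+L\,\exxp{y}\cH^0_{N+1}(y,x)$ with polynomial coefficients $A,B,L$ in $\QQ[x,y]$ becomes a bookkeeping exercise: collect the coefficient of each of the three linearly independent transcendental functions and of the boundary term $\hpgg01{N+1}{xy}$, and check that all four coefficients vanish identically as polynomials.

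Concretely, for $M=N-2$ I would substitute $A=N$, $B=y$, $L=\LQ_2=y-x+N-1$ into $\PP_{N,N-2}[y]$, expand using the Leibniz rule, replace every occurrence of $\hpgg01{N+2}{xy}$ by the contiguous-relation expression in the lower two, replace $\partial_y^k\bigl(\exxp{y}\cH^0_{N+1}(y,x)\bigr)$ by its reduction to $\exxp{y}\cH^0_{N+1}(y,x)$ plus $\hpgo01$-terms, and then verify the resulting polynomial identity in $\QQ[x,y,N]$. The cases $M=N-3$ and $M=N-4$ are handled the same way with $A,B,L$ read off from (\ref{eq:solm3}) and (\ref{eq:solm4}); the polynomials $\LQ_3=\LQ_2^2+2x-N+1$ and $\LQ_4=\LQ_2^3+3(2x-N+1)(\LQ_2-1)-N+1$ are tailored precisely so that the $\exxp{y}\cH^0_{N+1}(y,x)$-coefficient collapses. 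Alternatively — and this is the cleaner route for $M=N-3,N-4$ — I would invoke the recurrence (\ref{eq:grec}) together with the not-yet-proved (but stated) fact that $G_{n,m}(x,y)$ solves $\PP_{n-m+j,n-m}[y]=0$: since $Y_{N,3}$ and $Y_{N,4}$ are obtained from $Y_{N,2}$ by applying the raising operator $-y\,\partial_y^2-(n-m+1)\partial_y+x+m$, it suffices to show that this raising operator intertwines $\PP_{N,M}[y]$ with $\PP_{N+1,M-1}[y]$, i.e. that $\bigl(-y\,\partial_y^2-(M+1)\,\partial_y+x+1\bigr)\PP_{N,M}[y] = \PP_{N+1,M-1}[y]\bigl(-y\,\partial_y^2-(M+1)\,\partial_y+x+1\bigr) + (\text{something annihilating the solution})$; verifying this operator identity in the Weyl algebra $\QQ(x,y)\langle\partial_y\rangle$ once then gives the $M=N-3$ and $M=N-4$ cases for free from the $M=N-2$ case.

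The main obstacle I anticipate is the correct handling of the boundary contribution from $\cH^0_{N+1}(y,x)=\int_0^y\exxp{-t}\hpgg01{N+1}{tx}\,dt$ under $\partial_y$: one must track the factor $\exxp{y}$ carefully, since $\partial_y\bigl(\exxp{y}\cH^0_{N+1}(y,x)\bigr)=\exxp{y}\cH^0_{N+1}(y,x)+\hpgg01{N+1}{xy}$, and the second and third $y$-derivatives bring in further $\hpgo01$-terms and their $y$-derivatives (which by (\ref{eq:dify}) raise the index), so one needs the contiguous relation to push everything back into the span of $\hpgg01{N}{xy}$ and $\hpgg01{N+1}{xy}$ before comparing coefficients. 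Everything else is routine polynomial algebra; the only genuine content is that the coefficient polynomials $N,y,\LQ_2$ (resp. their $M=N-3,N-4$ analogues) are exactly the ones that kill all four coefficient polynomials simultaneously, which is what the lemma asserts. I would present the $M=N-2$ computation in some detail and then either repeat the pattern or, preferably, deduce $M=N-3$ and $M=N-4$ from the intertwining relation, deferring the verification of that relation and of recurrence (\ref{eq:grec}) to the same computation.
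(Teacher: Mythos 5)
Your proposal is correct in substance but takes a genuinely different route from the paper. The paper \emph{derives} the three formulas constructively: it factors $\PP_{N,N-2}[y]$ as $\LO_2\LO_1$ with an explicit first-order right factor over $\RR(x,N,y)$, solves $\LO_1Y=0$ and (with computer-algebra help for the power-series solutions with apparent singularities along $L_M=0$) the second-order left factor, and then obtains $Y_{N,M}$ by quadrature of an inhomogeneous first-order equation followed by integration by parts; the cases $M=3,4$ are treated by the same factorization scheme. You instead \emph{verify} the stated closed forms directly, using the key observation that $\hpgg01{N}{xy}$, $\hpgg01{N+1}{xy}$, $\exxp{y}\cH^0_{N+1}(y,x)$ span a $\partial/\partial y$-stable module over $\QQ(x,y,N)$ (via $\partial_y F=\frac{x}{N}G$, $\partial_y G=\frac{N}{y}(F-G)$ by the contiguous relation, and $\partial_y(\exxp{y}\cH^0_{N+1})=\exxp{y}\cH^0_{N+1}+G$), so that applying $\PP_{N,N-k}[y]$ to $A\,F+B\,G+L\,\exxp{y}\cH^0_{N+1}$ reduces to a finite polynomial-identity check. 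This does work — e.g.\ for $M=2$ one finds $(\partial_y-1)Y_{N,2}=\exxp{y}\cH^0_{N+1}(y,x)$ and the whole computation collapses to $\PP_{N,N-2}Y_{N,2}=-Y_{N,2}+Y_{N,2}=0$ — and it is more elementary and self-contained than the paper's derivation, at the cost of not explaining where the coefficient polynomials $\LQ_2,\LQ_3,\LQ_4$ come from. Note your "boundary term" is itself a multiple of $\hpgg01{N+1}{xy}$, so there are only three coefficients to kill, not four.

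Two caveats on your shortcut for $M=3,4$. First, the intertwining relation you want is $\PP_{N,N-M-1}[y]\,R_M=R_M\,\PP_{N,N-M}[y]$ with $R_M=y\,\partial_y^2+(N-M+1)\,\partial_y-x-M$ (your displayed version has the operators on the wrong sides and a shifted index); it follows from the two Weyl-algebra identities $\PP_{N,N-M-1}=R_M(\partial_y-1)-M$ and $\PP_{N,N-M}=(\partial_y-1)R_M-M$, and is essentially Theorem \ref{th:gsols}\refpart{ii} of the paper, so you should prove it rather than cite it. Second, the intertwining only tells you that $R_M Y_{N,2}$ is \emph{some} solution of the next operator; to prove the lemma as stated you must additionally compute $R_M Y_{N,2}$ explicitly (again using the three-function module) and check it coincides, up to the intended normalization, with the displayed formula for $Y_{N,3}$, and likewise for $Y_{N,4}$. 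Once you add that step, the argument is complete; without it, you have only shown that solutions of the right shape exist, not that the particular expressions \eqref{eq:solm3} and \eqref{eq:solm4} are solutions.
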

\begin{proof}
The operator $\PP_{N,N-2}[y]$ factors as $\LO_2\LO_1$ in 
$\RR(x,N,y)\langle\partial/\partial y\rangle$,
with
\begin{align*}
\LO_1 & = \dif{y}-1-\frc{1}{L_2}, \\
\LO_2 & =  y\diff{y}{2}+\left(N+\frc{y}{L_2}\right) \dif{y}
-x-1+\frc{y+N}{L_2}-\frc{y}{L_2^2}.
\end{align*}
The solution of $\LO_1Y_2=0$ is $Y_2(x,y)=L_2\exxp{y}$. 
This is a solution of  $\PP_{N,N-2}[y]=\LO_2\LO_1Y=0$ as well.
A non-logarithmic solution of $\LO_2Y=0$ is
\[
Y_3(y)=\frc{1}{L_2}\left( \hpgg01{N}{xy}+\frc{y}{N} \,\hpgg01{N+1}{xy} \right).
\]
Solving $\PP_{N,N-2}[y]Y=0$ now means solving the non-homogeneous $\LO_1Y=Y_3$.
This leads to the integration
\[
\int_0^{y} \frac{\exxp{-t}}{(t-x+N-1)^2}
\left( \hpgg01{N}{xt}+\frac{t}{N} \,\hpgg01{N+1}{xt} \right) dt.  
\]  
After a step of integration by parts (and multiplication by $N$), we obtain (\ref{eq:solm2}).

The other two operators $\PP_{N,N-M}[y]$,  $M\in\{3,4\}$ %\PP_{N,N-4}[y]$ 
factor similarly $\LO^{(M)}_2\LO^{(M)}_1$ as $\PP_{N,N-2}[y]$. 
In the same way, by solving the first order $\LO^{(M)}_1Y^{(M)}_2=0$ and the second order 
$\LO^{(M)}_2Y^{(M)}_3=0$,  we are led to solving the non-homogeneous first order 
$\LO^{(M)}_1Y=Y^{(M)}_3$. 
The equations $\LO^{(M)}_2Y^{(M)}_3=0$ have $M-1$ apparent singularities 
defined by $L_M=0$. {\sf Maple 18} does not solve them,
but looking at (non-logarithmic) power series solutions multiplied by $L_M$ we recognize
the solutions
\begin{equation}
Y_3^{(M)}=\frac{1}{L_M} \, \sum_{j=0}^{M}  \, 
{M \choose j} \,\frac{y^j}{(N-M+1)_{j}}\,\hpgg01{N\!-\!M\!+\!j\!+\!1}{xy}.
\end{equation}
Similarly as for $M=2$, the solutions of $\LO^{(M)}_1Y=Y^{(M)}_3$ are integrals 
that can be simplified to (\ref{eq:solm3}) or (\ref{eq:solm4}) by applying integration 
by parts a few times. 
\end{proof}
A general non-logarithmic solution of $\PP_{N,N-2}[y]$ is 
\begin{equation} \label{eq:gencc}
C_1Y_{N,2}(x,y)+C_2L_2\exxp{y}.
\end{equation}
The function $G_{n,2}(x,y)$ in (\ref{eq:osol2}) differs from $Y_{n,2}(x,y)$ 
by
\begin{align} \label{eq:intconst}
C_2=& \,-\int_0^\infty \! \exxp{-t} \, \hpgg01{n+1}{xt} dt \\
\label{eq:intconst2}
=& -n\,x^{-n}\,\exxp{x}\,\gamma(n,x).
\end{align}
The latter expression is obtained by expanding the $\hpgo01$-series,
and recognizing a $\hpgo11$-series for the incomplete gamma function $\gamma(n,x)$
after the definite integration.   
Similarly, general non-logarithmic solutions of $\PP_{N,N-3}[y]$, $\PP_{N,N-4}[y]$
are obtained by scalar multiplication (by $C_1$) and 
considering the integral $\cH_{N+1}^0(y,x)$ with an integration constant $C_2$.

We observe empirically, especially from differentiation relations between $L_2,L_3,L_4$, 
% coefficients to the $\cH$-integrals  in (\ref{eq:solm2})--(\ref{eq:solm4}), 
that % for $M=3,4$ we have 
\begin{equation}
Y_{N,M-1}(x,y)=(M-1) \, \Big(\dif{y}-1\Big) \, Y_{N,M}(x,y)
\end{equation}
for $M=3,4$. This observation indeed generalizes,
leading us to lowering and raising operators on non-logarithmic solutions.
\begin{theorem} \label{th:gsols}
Let $Y_{N,M}(x,y)$ denote a solution of $\PP_{N,N-M}[y]$.
\begin{enumerate}
\item The differential operator $\dif{y}-1$ transforms $Y_{N,M}(x,y)$ 
to a solution of $\PP_{N,N-M+1}[y]$.
\item The differential operator
\begin{equation} \label{eq:gdrec}
y\,\diff{y}2+(N-M+1)\,\dif{y}-x-M
\end{equation}
transforms $Y_{N,M}(x,y)$ to a solution of $\PP_{N,N-M-1}[y]$.
\item If $Y_{N,M-1}(x,y)=\Big(\dif{y}-1\Big)\,Y_{N,M}(x,y)$ \\
and $Y_{N,M-2}(x,y)=\Big(\dif{y}-1\Big)^{\!2} \,Y_{N,M}(x,y)$, then
% \begin{equation*} \hspace{-12pt}
% Y_{N,M-1}(x,y)=\Big(\dif{y}-1\Big)\,Y_{N,M}(x,y), \quad
% Y_{N,M-2}(x,y)=\Big(\dif{y}-1\Big)^2 \,Y_{N,M}(x,y)
% \end{equation*}
\begin{align} 
& (y-x+N-2M+1)Y_{N,M}(x,y) \\
& +(2y+N-M+1)Y_{N,M-1}(x,y)+yY_{N,M-2}(x,y)  \qquad \nonumber
\end{align}
is a solution of $\PP_{N,N-M-1}[y]$.
\end{enumerate}
\end{theorem}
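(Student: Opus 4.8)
The plan is to verify all three transmutation statements by directly exhibiting the intertwining identities between the relevant operators $\PP_{N,N-M}[y]$ in the Weyl algebra $\RR(x,N,y)\langle\partial/\partial y\rangle$, so that no reference to the explicit solution formulas (\ref{eq:solm2})--(\ref{eq:solm4}) is needed. Concretely, write $\DD_- = \dif{y}-1$ and $\DD_+^{(M)} = y\,\diff{y}2+(N-M+1)\,\dif{y}-x-M$ for the candidate lowering and raising operators in \refpart{i} and \refpart{ii}. For \refpart{i} it suffices to produce an operator $\cR_1$ with
\begin{equation}
\PP_{N,N-M+1}[y]\,\DD_- = \cR_1\,\PP_{N,N-M}[y],
\end{equation}
and for \refpart{ii} an operator $\cR_2$ with
\begin{equation}
\PP_{N,N-M-1}[y]\,\DD_+^{(M)} = \cR_2\,\PP_{N,N-M}[y].
\end{equation}
Once such factorizations hold, any solution $Y$ of $\PP_{N,N-M}[y]Y=0$ is sent by $\DD_-$ (resp.\ $\DD_+^{(M)}$) to a solution of $\PP_{N,N-M+1}[y]$ (resp.\ $\PP_{N,N-M-1}[y]$). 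The existence of $\cR_1,\cR_2$ is a finite computation: since $\PP_{N,N-M}[y]$ has order $3$ and the left-hand sides have order $4$, one divides on the right by $\PP_{N,N-M}[y]$ and checks that the remainder (an operator of order $\le 2$) vanishes identically as a rational-coefficient operator. I would first confirm $\DD_-$ in \refpart{i} by noting that $\PP_{N,M}[y]$ itself already displays a near-factorization: from the identity $\TT_m[y]=\big(\dif{y}-1\big)\LL_{N,m}[y]-m\,\dif{y}$ used in \S\ref{sec:difops} (with $M=m$, $N$ there equal to $n$), one sees $\PP_{N,M}[y] = \big(\dif{y}-1\big)\LL_M[y] - (N-M)\,\dif{y}$ after matching indices, and a short manipulation with $\LL_M[y]=y\,\diff{y}2+(M+1)\,\dif{y}-x$ yields the right-divisibility directly.

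For \refpart{iii}, the plan is to reduce the claimed degree-one (in the $Y$'s) combination to \refpart{i} and \refpart{ii}. Under the hypothesis $Y_{N,M-1}=\DD_-Y_{N,M}$ and $Y_{N,M-2}=\DD_-^2Y_{N,M}$, the combination
\begin{equation}
(y-x+N-2M+1)\,Y_{N,M}+(2y+N-M+1)\,Y_{N,M-1}+y\,Y_{N,M-2}
\end{equation}
equals $\big[(y-x+N-2M+1)+(2y+N-M+1)\DD_-+y\,\DD_-^2\big]Y_{N,M}$ as an operator applied to $Y_{N,M}$. The expectation, based on the empirical observation recorded just before the theorem, is that this operator equals $\DD_+^{(M)}$ modulo a left multiple of $\PP_{N,N-M}[y]$; that is, $(y-x+N-2M+1)+(2y+N-M+1)\DD_-+y\,\DD_-^2 = \DD_+^{(M)} + \cS\,\PP_{N,N-M}[y]$ for some operator $\cS$ of order $0$. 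If this holds, then \refpart{iii} follows immediately from \refpart{ii} because the two operators agree on the solution space of $\PP_{N,N-M}[y]$, and $\DD_+^{(M)}$ sends that space into solutions of $\PP_{N,N-M-1}[y]$. Verifying the operator identity is again a bounded computation: expand $\DD_-^2 = \diff{y}2 - 2\dif{y} + 1$, collect the combination into a second-order operator in $\dif{y}$ with polynomial coefficients in $y$ (and parameters $x,N,M$), and subtract $\DD_+^{(M)}$; what remains should be a left multiple of the order-$3$ operator $\PP_{N,N-M}[y]$, but since the remainder has order $2<3$ it must in fact vanish identically, giving $\cS=0$.

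The main obstacle I anticipate is bookkeeping with the two shifted index roles: $\PP_{N,M}[y]$ is written with second subscript $M$ meaning ``$N-M=M$-in-the-other-notation'', and the theorem speaks of $\PP_{N,N-M}[y]$, so one must be careful that ``lowering $M$ by one'' corresponds to replacing the second argument $N-M$ by $N-M+1$, i.e.\ to the operator $\PP_{N,N-M+1}[y]$ — matching the statement. Aligning $\LL_M[y]$ (whose subscript is the ``$M$-in-the-other-notation'' value $M$, appearing as $M+1$ in the coefficient of $\dif{y}$) with $\PP_{N,M}[y]$ requires substituting the running index consistently; a single sign or off-by-one error propagates through all the divisions. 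Apart from this, the computations are routine right-division in a rational Weyl algebra, and I would organize them so that \refpart{i} is proved first, \refpart{ii} second (its right-divisibility check being the longest single computation, as $\DD_+^{(M)}$ has order $2$ and the product has order $4$), and \refpart{iii} deduced last as a corollary of the operator identity relating $\DD_+^{(M)}$ to the $\DD_-$-polynomial modulo $\PP_{N,N-M}[y]$.
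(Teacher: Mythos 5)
Your proposal is correct and takes essentially the same approach as the paper: all three parts rest on operator identities in $\RR(x,N,y)\langle\partial/\partial y\rangle$, which the paper records as the explicit factorizations $\PP_{N,N-M+1}[y]\big(\dif{y}-1\big)=\big(\dif{y}-1\big)\PP_{N,N-M}[y]$ for \refpart{i}, $\PP_{N,N-M-1}[y]=\big(y\,\diff{y}{2}+(N-M+1)\,\dif{y}-x-M\big)\big(\dif{y}-1\big)-M$ for \refpart{ii}, and the analogous expansion of $\PP_{N,N-M-1}[y]$ as a cubic in $\dif{y}-1$ for \refpart{iii} --- precisely the intertwiners your right-divisions would produce, with $\cR_1=\dif{y}-1$, $\cR_2$ equal to the operator (\ref{eq:gdrec}), and $\cS=0$. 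The only cosmetic slips are that the product in \refpart{ii} has order $5$ rather than $4$, and that in \refpart{iii} the second-order combination operator equals (\ref{eq:gdrec}) identically, not merely modulo a left multiple of $\PP_{N,N-M}[y]$.
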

\begin{proof}
% Part \refpart{i} of Theorem  \ref{th:gsols} 
The first claim follows from the commutation relation
\begin{equation}
\PP_{N,N-M+1}[y]\, \Big(\dif{y}-1\Big)=\Big(\dif{y}-1\Big)\, \PP_{N,N-M}[y].
\end{equation}
The second claim follows by rewriting % the third order operator
\[ 
\PP_{N,N-M-1}[y] =  
\Big(y\,\diff{y}2+(N\!-\!M\!+\!1)\,\dif{y}-x-M\Big)\!\Big(\dif{y}-1\Big)-M. \quad
\]
% Part \refpart{iii} of Theorem  \ref{th:gsols} 
The last claim similarly follows from
\begin{align} \PP_{N,N-M-1}[y] = &\, 
y\Big(\dif{y}-1\Big)^3+(2y+N-M+1)\Big(\dif{y}-1\Big)^2 \nonumber \\
&+(y-x+N-2M+1)\Big(\dif{y}-1\Big)-M.
\end{align}
\end{proof}
The recurrence (\ref{eq:grec}) in Conjecture \ref{th:conjecture} is 
a slight modification of the differential operator (\ref{eq:gdrec}), 
and the functions $G_{n,m}(x,y)$ 
differ from the solutions $Y_{n,m}(x,y)$ of this section by 
the difference (\ref{eq:intconst}) and the sign $(-1)^m$.

\section{Holonomic systems for $m\le 3$}
\label{sec:m2}

The results of this article originated from explicit computations for the matrix dimensions 
$m=2$ and $m=3$. The aim was holonomic systems for $\cf_{n,m}(x,\lamb_1,\ldots,\lamb_m)$,
so that the holonomic gradient method \cite{HNTT2013}  could be applied for numeric computation
of the probability density function. 

\subsection{The rank 12 system}

With $m=2$, the holonomic system of Theorem \ref {th:difops} %\S \ref{sec:holonomic} 
has rank 12. It is easy to obtain 
% \todo{This was demonstrated by Christoph Koutschan in March 2015.}
from standard differential equations for 
$\hpgg01{n-1}{x\lamb_1}$, $\hpgg01{n-1}{x\lamb_2}$
and the integrals 
\[
\cH_{n-1}^{n-1}(x,\lamb_1), \quad \cH_{n-1}^{n-1}(x,\lamb_2), \quad
\cH_{n-2}^{n-1}(x,\lamb_1), \quad \cH_{n-2}^{n-1}(x,\lamb_2).
\] 
This was demonstrated by Christoph Koutschan on his {\sf Mathematica} package.
The singularities of the holonomic system are  along  
\begin{equation} \label{eq:sing12}
x=0, \quad \lamb_1=0, \quad \lamb_2=0, \quad  \infty\mbox{-compactifations}.
%and a compactifation at infinity.
\end{equation}
It is generated by these three differential operators of order 2 or 3:
\begin{align}
& \lamb_1\diff{\lamb_1}{2}+\lamb_2\diff{\lamb_2}{2}
-(\lamb_1-n+1)\dif{\lamb_1}-(\lamb_2-n+1)\dif{\lamb_2} +x\dif{x}-2n+2,\\
\label{eq:mixed}
& \frc{\partial^3}{\partial x\partial \lamb_1\partial \lamb_2}
+2\frc{\partial^2}{\partial \lamb_1\partial \lamb_2}
-\dif{\lamb_1}-\dif{\lamb_2},\\
& \! \left({\lamb_1\lamb_2}\dif{\lamb_1}
+{\lamb_1\lamb_2}\dif{\lamb_2}
+(n-1)(\lamb_1+\lamb_2)\right) \frc{\partial^2}{\partial \lamb_1\partial \lamb_2} 
+(n-1)x \nonumber \\
& \qquad + \! \left(x\dif{x}+2x-n+2\right) \!\!
\left(x\dif{x}-\lamb_1\dif{\lamb_1}-\lamb_2\dif{\lamb_2}+x-2n+2\right). 
\end{align}
The first equation as in Theorem \ref{th:elimx2}.
For the sake of compactness, the last equation is expressed using non-commutative multiplication 
(in the last term).

Elimination of $\partial/\partial x$ and $\partial/\partial \lamb_2$ % in both systems 
leads to the fifth order operator
\begin{align}  \label{eq:order5}
\lamb_1^2\diff{\lamb_1}{5}+\lamb_1(2n-\lamb_1+2)\diff{\lamb_1}{4}
+(n^2+n-2x\lamb_1-2n\lamb_1-3\lamb_1)\diff{\lamb_1}{3} \qquad \nonumber \\
-(n^2+2n-2x\lamb_1+2nx)\diff{\lamb_1}{2}+x(2n+x+1)\dif{\lamb_1}-x^2.
\end{align}
Consistent with the theory of Gr\"obner bases, 
the rank 8 system has elimination equations with the leading monomials
$\partial^4/\partial \lamb_1^3\partial \lamb_2+\ldots\;$ and $\;\partial^2/\partial \lamb_2^2+\ldots$.
The fifth order operator % Operator (\ref{eq:order5}) 
does not involve the variable $\lamb_2$ even.
It factorizes as the LCLM of $\LL_{n-2}[\lamb_1]$ and $\PP_{n,n-2}[\lamb_1]$. 
Correspondingly, the holonomic system 
factorizes nicely  to a direct sum  of 
two 6-dimensional subspaces, each of those subspaces  is  
a tensor product of a rank 2 system in one variable $\lamb_1$ or $\lamb_2$,
and  rank 3 system in the other variable.
The factorization corresponds nicely with the terms in the expanded determinantal formula
(\ref{th:om2}), as $\hpgg01{n-1}{xy}$ is a solution of $\LL_{n-2}[y]$,
and $G_{n,2}(x,y)$ is a solution of $\PP_{n,n-2}[y]$.

\subsection{Proof of formula (\ref{th:om2})}
\label{sec:solrecp}
 
We seek to prove
\begin{equation} \label{eq:r2det}
\cR_{n,2}(x,\lamb_1,\lamb_2)= 
\frac{x^{2n-2}\,\exxp{-2x}}{n\,(n-1)} \,
\det \left( \begin{array}{cc}
\hpgg01{n-1}{x\lamb_1} & \hpgg01{n-1}{x\lamb_2} \\[2pt]
G_{n,2}(x,\lamb_1)  & G_{n,2}(x,\lamb_2)
\end{array} \right).
\end{equation}
With the same holonomic system of rank 12 established for both sides, 
it is enough to compare a few coefficients in the two series expansions in $\lamb_1,\lamb_2$.
The subspace of non-logarithmic anti-symmetric solutions is 2-dimensional,
hence it is enough to compare 2 pairs of independent coefficients.

After division by $\lamb_1-\lamb_2$ as in (\ref{th:om2}),
a proper general setting is expansion in terms of the symmetric % {\em Schur functions} 
{\em Schur polynomials} \cite{WikiWishart}
in $\lamb_1,\lamb_2$. The Schur polynomials
functions are defined in terms of monomial determinants.
Correspondingly, we formulate the following general statement.
\begin{lemma} \label{th:schur}
Consider $m$ functions $f_1(y),\ldots,f_m(y)$ defined by the convergent series
\begin{equation} \label{eq:fff}
f_k(y)=\sum_{k=0}^{\infty} c^{(k)}_j y^j.
\end{equation}
Then
\begin{equation} \label{eq:detdets}
\det \big( \, f_i(\lamb_j) \; \rangle_{i=1}^m  \, \big) =
\mathop{ \sum \, \cdots \, \sum }_{0\le q_1<q_2<\ldots<q_m}
% \sum_{q_1=0}^\infty \, \sum_{j_2=j_1+1}^\infty \!  \cdots \! \sum_{j_m=j_{m-1}+1}^\infty
\det \Big( \, c^{(i)}_{q_j} \;\, \big\rangle_{j=1}^m  \, \Big) \,
\det \Big( \, \lamb_i^{q_j} \; \big\rangle_{j=1}^m  \, \Big).
\end{equation}
\end{lemma}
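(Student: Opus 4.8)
The plan is to recognize Lemma \ref{th:schur} as the Cauchy--Binet formula for the product of the $m\times\infty$ coefficient matrix $\big(c^{(i)}_q\big)$ (rows indexed by $i$, columns by $q\ge0$) with the $\infty\times m$ ``infinite Vandermonde'' matrix $\big(\lamb_j^{q}\big)$, and to prove it directly by the permutation expansion of the $m\times m$ determinant. First I would write $\det\big(f_i(\lamb_j)\;\rangle_{i=1}^m\big)=\sum_{\sigma\in S_m}\mathrm{sgn}(\sigma)\prod_{i=1}^m f_i(\lamb_{\sigma(i)})$, substitute the series (\ref{eq:fff}) into each factor, and use absolute convergence of the power series (on the polydisc where the identity is to be applied) to interchange the finite sum over $S_m$ with the infinite summations. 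Collecting the monomials in $\lamb_1,\ldots,\lamb_m$ then gives
\begin{equation*}
\det\big(f_i(\lamb_j)\;\rangle_{i=1}^m\big)
=\sum_{q_1\ge0}\cdots\sum_{q_m\ge0}\Big(\prod_{i=1}^m c^{(i)}_{q_i}\Big)\,\det\big(\lamb_j^{q_i}\;\rangle_{i=1}^m\big).
\end{equation*}

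The second step is to collapse this sum. Whenever two exponents $q_i$ coincide, the generalized Vandermonde $\det\big(\lamb_j^{q_i}\;\rangle_{i=1}^m\big)$ has two equal rows and vanishes, so only tuples with pairwise distinct exponents survive. Grouping the surviving tuples by their increasing rearrangement $0\le q_1<q_2<\cdots<q_m$, the tuples in one group are the $(q_{\tau(1)},\ldots,q_{\tau(m)})$ with $\tau\in S_m$; permuting the rows of the Vandermonde by $\tau$ contributes $\mathrm{sgn}(\tau)$, and $\sum_{\tau}\mathrm{sgn}(\tau)\prod_i c^{(i)}_{q_{\tau(i)}}$ is precisely $\det\big(c^{(i)}_{q_j}\;\rangle_{j=1}^m\big)$. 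This turns the double-indexed sum into the claimed sum over $0\le q_1<\cdots<q_m$, that is (\ref{eq:detdets}); the transposition of the row/column roles between the two factor determinants and the original matrix is harmless since $\det A=\det A^{T}$.

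The one genuinely nontrivial point is the justification of the rearrangement of the multiple infinite series; everything else is finite linear algebra over the base field. That rearrangement is licensed by absolute convergence of the power series (\ref{eq:fff}) throughout the region on which the lemma is invoked, and within such a region all the manipulations above are unconditional. Finally, for the intended use I would note that dividing (\ref{eq:detdets}) by the Vandermonde $\prod_{i<j}(\lamb_i-\lamb_j)$ rewrites the right-hand side as a series in the Schur polynomials $s_\mu(\lamb_1,\ldots,\lamb_m)$ (via the bialternant formula), with $\mu$ ranging over partitions with at most $m$ parts and coefficients $\det\big(c^{(i)}_{q_j}\big)$; this is the representation that will be used to match the two sides of (\ref{eq:r2det}) by comparing finitely many coefficients.
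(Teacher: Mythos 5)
Your proposal is correct and follows essentially the same route as the paper: expand into the multiple series $\sum_{q_1,\ldots,q_m}c^{(1)}_{q_1}\cdots c^{(m)}_{q_m}\det(\lambda_i^{q_j})$, discard the terms where exponents coincide, and collect tuples with the same underlying set into the coefficient determinant. Your version merely adds the explicit permutation-expansion bookkeeping, the convergence justification, and the Cauchy--Binet framing, none of which changes the argument.
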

\begin{proof}
Intermediate expansions are
\begin{align}
= & \sum_{q_1=0}^\infty \, \cdots \, \sum_{q_m=0}^{\infty}
% \sum_{q_1=0}^\infty \, \sum_{j_2=j_1+1}^\infty \!  \cdots \! \sum_{j_m=j_{m-1}+1}^\infty
\det \Big( \, c^{(i)}_{q_j} \;\, \big\rangle_{j=1}^m  \, \Big) \,\lamb_1^{q_1}\cdots \lamb_m^{q_m} \\
= & \sum_{q_1=0}^\infty \, \cdots \, \sum_{q_m=0}^{\infty}
% \sum_{q_1=0}^\infty \, \sum_{j_2=j_1+1}^\infty \!  \cdots \! \sum_{j_m=j_{m-1}+1}^\infty
 c^{(1)}_{q_1} \cdots c^{(m)}_{q_m} \,
\det \Big( \, \lamb_i^{q_j} \; \big\rangle_{j=1}^m  \, \Big).
\end{align}
The newest determinants with some $q_i=q_j$ for $i\neq j$ are zero.
After collecting the terms with the same sets $\{q_1,\ldots,q_m\}$,  
% $\det( \, \lamb_i^{q_j} \; \rangle_{j=1}^m  \, )$
we get the result.
\end{proof}

We first apply this lemma to the determinant in (\ref{eq:start}) % (\ref{eq:cdfdef}) %(\ref{eq:defr}) 
with $f_1(y)=\cH_{n-1}^{n-1}(x,y)$,  $f_2(y)=\cH_{n-1}^{n-2}(x,y)$.
Therefore 
\[
c^{(1)}_j=\frac{\gamma(n+j,x)}{(n-1)_j\;j!}, \qquad c^{(2)}_j=\frac{\gamma(n+j-1,x)}{(n-1)_j\;j!}.
\]
We differentiate (\ref{eq:detdets}) to get
\begin{align} \label{eq:start2}
% \hspace{-4pt}
\cR_{n,2}(x,\lamb_1,\lamb_2) & =  
\sum_{i=0}^{\infty} \sum_{j=i+1}^{\infty} \!
\frac{ {\displaystyle \frac{d}{d x}} \det \! \left( \! 
\begin{array}{cc} \gamma(n+i,x) & \! \gamma(n+i-1,x) \\
\gamma(n+j,x) & \! \gamma(n+j-1,x) \end{array}
\! \right)}{(n-1)_i(n-1)_j\,i!j!}
\det \! \left( \! \begin{array}{cc}
\!\lamb_1^i & \!\! \lamb_2^i \! \\[2pt]
\! \lamb_1^j & \!\! \lamb_2^j \! \\
\end{array} \! \right) \nonumber \\ 
& = \frac{d}{d x} \det \left( \! 
\begin{array}{cc} \gamma(n,x) & \gamma(n-1,x) \\
\gamma(n+1,x) & \gamma(n,x) \end{array}
\! \right)\frac{\lamb_2-\lamb_1}{n-1}  \\
& \quad + \frac{d}{d x} \det \left( \! 
\begin{array}{cc} \gamma(n,x) & \gamma(n-1,x) \\
\gamma(n+2,x) & \gamma(n+1,x) \end{array}
\! \right)\frac{\lamb^2_2-\lamb^2_1}{2n(n-1)}  \nonumber \\
& \quad + \ldots  \nonumber \\
& \hspace{-44pt} = 
\left( \! \Big( \frac{x^2}{n-1}-2x+n \Big) \gamma(n-1,x)+\frac{x-n}{n-1}\,x^{n-1}\exxp{-x} \right)
x^{n-2}\exxp{-x} \,   (\lamb_1-\lamb_2)  \nonumber \\
\label{eq:detse2} &\hspace{-30pt} \! +  
\left( \! \Big( \frac{x^3}{n(n-1)}-\frac{x^2}n-x+n+1 \Big) \gamma(n-1,x) \right.\\
 & \left. \hspace{38pt}  +\frac{(x-n-1)(x+n)}{n(n-1)}\,x^{n-1}\exxp{-x} \right) 
   x^{n-2}\exxp{-x}\,\frac{\lamb^2_1-\lamb^2_2}2  \nonumber \\
& \hspace{-30pt} + \ldots.  \nonumber
\end{align}
Surely, recurrence (\ref{incgamma}) has been used. % TOTO
Considering the left-hand side of (\ref{eq:r2det}), we set $c^{(1)}_j=x^j/\big(j!\,(n-1)_j\big)$. 
An expansion of (\ref{eq:solm2}) with $N=n$ is
\begin{align}
Y_{n,2}(x,y) = & \,  \, n+ny
+\sum_{k=0}^\infty \left( n+\sum_{j=0}^k \frac{k+1-j}{(n+1)_j} \, x^j\right) \frac{y^{k+2}}{(k+2)!}.
%  \\= & \, n+ny+(n+1)\frac{y^2}2+\Big(n+2+\frac{x}{n+1}\Big)\frac{y^3}{6}+ \ldots.
% & + \left(n+3+\frac{2x}{n+1}+ \frac{x^2}{(n+1)(n+2)}\right) \frac{y^4}{24}+\ldots \\
% + & = -L\exxp{y}\int_0^{\infty} \exxp{-t} \hpgg01{n+1}{xt} dt
\end{align}
We only need the first few terms.
To get coefficients of $G_{n,2}(x,y)$, we add the difference $C_2(y-x+n-1)\exxp{y}$
as in (\ref{eq:intconst})--(\ref{eq:intconst2}),
thus adding
\begin{align*}
n\,x^{-n}\,\exxp{x}\,\gamma(n,x)\sum_{k=0}^{\infty} (x-n-k+1)\frac{y^k}{k!}.
\end{align*}
Therefore
\begin{align} \label{eq:ccc}
c^{(2)}_0= & \, n+n\,(x-n+1)\,x^{-n}\exxp{x}\gamma(n,x), \nonumber \\
c^{(2)}_1= & \, n+n\,(x-n)\,x^{-n}\exxp{x}\gamma(n,x), \\
c^{(2)}_2= & \, \frac{n+1}2+\frac{n}2\,(x-n-1)\,x^{-n}\exxp{x}\gamma(n,x). \nonumber
\end{align}
The determinant in  (\ref{eq:r2det}) expands as
\begin{align*}
 \det \left( \! 
\begin{array}{cc}1 & \frac{x}{n-1} \\
c^{(2)}_0 & c^{(2)}_1 \end{array} \right) (\lamb_2-\lamb_1)
+ \det \left( \! 
\begin{array}{cc}1 &  \frac{x^2}{2n(n-1)} \\
c^{(2)}_0 & c^{(2)}_2 \end{array} \right) (\lamb_2^2-\lamb_1^2)
\end{align*}
We get the same two terms as in (\ref{eq:detse2}).

\subsection{The rank 8 system}
\label{sec:rank8}

A rank 8 holonomic system for $\cR_{n,2}(x,\lamb_1,\lamb_2)$ 
is obtained by expressing this function and its derivatives 
as linear combinations of
\begin{align*}
x^{n}\,\exxp{-x}\hpgg01{n-1}{x\lamb_2} \cH_{n-1}^{n}(x,\lamb_1), && 
x^{n}\,\exxp{-x}\hpgg01{n}{x\lamb_2} \cH_{n-1}^{n}(x,\lamb_1), \\
x^{n}\,\exxp{-x}\hpgg01{n-1}{x\lamb_1} \cH_{n-1}^{n}(x,\lamb_2), && 
x^{n}\,\exxp{-x}\hpgg01{n}{x\lamb_1} \cH_{n-1}^{n}(x,\lamb_2), \\
x^{2n}\,\exxp{-2x}\hpgg01{n-1}{x\lamb_1}\hpgg01{n-1}{x\lamb_2}, && 
x^{2n}\,\exxp{-2x}\hpgg01{n-1}{x\lamb_1}\hpgg01{n}{x\lamb_2}, \\
x^{2n}\,\exxp{-2x}\hpgg01{n}{x\lamb_1}\hpgg01{n-1}{x\lamb_2}, && 
x^{2n}\,\exxp{-2x}\hpgg01{n}{x\lamb_1}\hpgg01{n}{x\lamb_2}.
\end{align*}
% As explained in \S \ref{sec:holos}, 
These expressions follow from Theorem \ref{th:hrank} \refpart{iv}. %Proposition \ref{lm:basis3}.
For example, the expression of $\cR_{n,2}(x,\lamb_1,\lamb_2)$ has these coefficients, respectively:
\begin{align*}
\frac{\lamb_1-x}{n-1}+1, \quad 0, \quad 
-\frac{\lamb_2-x}{n-1}-1, \quad 0, \quad
0, \quad \frac{x}{n-1}-1, \quad -\frac{x}{n-1}+1, \quad 0.
\end{align*}
Further, the expression of $(n-1)\,\partial \cR_{n,2}/\partial x$ has these coefficients:
\begin{align}
-1, \qquad \lamb_2\,\Big(\frac{\lamb_1-x}{n-1}+1\Big), \qquad 
1, \qquad -\lamb_1\,\Big(\frac{\lamb_2-x}{n-1}+1\Big), \\
0, \qquad -\lamb_2+1, \qquad \lamb_1-1, \qquad 
(\lamb_1-\lamb_2)\,\Big(\frac{x}{n-1}-1\Big); \nonumber
\end{align}
and so on. The rank 8 system has singularities not just along (\ref{eq:sing12}), 
but additionally along \mbox{$\lamb_1=\lamb_2$} and the hypersurface $S_n(x,y)=0$, where
\begin{equation}
S_n(x,y)=(2x-n+1)^2\,(\lamb_1+\lamb_2-2x+2n-2)-\frac{x}{2}\,(\lamb_1-\lamb_2)^2.
\end{equation}
It contains another second order operator
\begin{align} \!
& S_n(x,y)  \Big(  x\DD_x^2 %\frac{\partial^2}{\partial x^2}
 +\frac{2\lamb_1\lamb_2}{x} \frc{\partial^2}{\partial \lamb_1\partial \lamb_2}
 \! -\DD_\lamb \left(2\DD_x+1\right)+(x-n+1)\DD_x+\lamb_1+\lamb_2-1\Big) \nonumber \\
& +(\lamb_1+\lamb_2-4x+2n-2) \, \Big( (x-n+1) \! \Big( \DD_\lamb \DD_x
-\frac{2\lamb_1\lamb_2}{x}\frac{\partial^2}{\partial \lamb_1\partial \lamb_2}\Big) \\ 
& \hspace{122pt} 
 +(2x-n) \! \left( \DD_\lamb+(x-n+1)\DD_x -1 \right) \Big)   \hspace{56pt}  \nonumber \\
& +2 (2x-n+1)  \Big( (\lamb_1+\lamb_2) \left( \DD_\lamb+2x-n+1\right)
-\lamb_1^2\frac{\partial}{\partial \lamb_1^2}-\lamb_2^2\frac{\partial}{\partial \lamb_2^2}
-\frac{(\lamb_1+\lamb_2)^2}2 \Big)  \nonumber \\
& +2 \lamb_1\lamb_2  \Big( x\DD_\lamb\DD_x-\frac{\lamb_1+\lamb_2}2-2x+n \Big) \nonumber\\
& \hspace{122pt}   +(\lamb_1-\lamb_2) \!
 \Big( \lamb_1^2\frac{\partial^2}{\partial \lamb_1^2}-\lamb_2^2\frac{\partial^2}{\partial \lamb_2^2}
 -\lamb_1^2\frac{\partial}{\partial \lamb_1}+\lamb_2^2\frac{\partial}{\partial \lamb_2} \Big),
  \nonumber
\end{align}
where
\begin{equation}
\DD_x=\dif{x}+1-\frac{n-2}{x},  \qquad
\DD_\lamb=\lamb_1\dif{\lamb_1}+\lamb_2\dif{\lamb_2}.
\end{equation}
The smaller rank system appears to be more complex.
It can be obtained from the rank 12 system by adjoining this 3rd order operator:
\begin{align}
\Big(2\lamb_1\dif{\lamb_1}+2\lamb_2\dif{\lamb_2}
-3\lamb_1-3\lamb_2+4n-6\Big)  \frc{\partial^2}{\partial \lamb_1\partial \lamb_2} 
+\lamb_2\dif{\lamb_1}+\lamb_1\dif{\lamb_2}  \nonumber \\
-\Big(x\diff{x}{2}+(x-n+3)\dif{x}+n\Big)
\! \Big(\dif{\lamb_1}+\dif{\lamb_2}\Big)+3x \dif{x}-2n+6.
\end{align}

Elimination of $\partial/\partial x$ and $\partial/\partial \lamb_2$ leads to the same fifth order operator
(\ref{eq:order5}). The elimination equations with the leading monomials
$\partial^4/\partial \lamb_1^2\partial^2 \lamb_2+\ldots\;$ and $\;\partial^2/\partial \lamb_2^3+\ldots$.
Factorization of the solution spaces of operator (\ref{eq:order5}) 
is harder to follow, as the 6-dimensional subspaces intersect.

\subsection{The case $m=3$}
\label{sec:m3}

The holonomic system of Theorem \ref{th:difops} has rank 108 when $m=3$.
A Gr\"obner basis computation without $\partial/\partial x$ is fast on {\sf Maple 18}
(with respect to a total degree ordering in $\partial/\partial\lamb_k$'s, 
on a 2.8GHz {\sf MacBook Pro} of 2014). 
The lowest order operator in the $\partial/\partial\lamb_k$'s is of order 5.
Allowing $\partial/\partial x$, it is equivalent to 
\begin{equation}
\frc{\partial^4}{\partial x\partial \lamb_1\partial \lamb_2\partial \lamb_3}
+3\frc{\partial^3}{\partial \lamb_1\partial \lamb_2\partial \lamb_3}
-\frc{\partial^2}{\partial \lamb_1\partial \lamb_2}
-\frc{\partial^2}{\partial \lamb_1\partial \lamb_3}
-\frc{\partial^2}{\partial \lamb_2\partial \lamb_3}.
\end{equation}
This expression is comparable with (\ref{eq:mixed}). 
A Gr\"obner basis computation with $\partial/\partial x$
leads to rapid increase of memory usage, 4GB in a few minutes.

Replacing $\TT_2[y]$ by (\ref{eq:altp3}) gives the same LCLM operators,
but a different holonomic system of rank 108.
A similar Gr\"obner basis computation without $\partial/\partial x$ takes about 8 minutes.
The lowest order operator is 
\begin{align}
\sum_{k=1}^3 \Big( & \lamb_k^2\diff{\lamb_k}4+(2n-2-\lamb_k)\diff{\lamb_k}3
+(n^2-3n+2-(x+2n)\lamb_k)\diff{\lamb_k}2  \nonumber \\
&  +(x\lamb_k-(n-2)(x+n+1))\dif{\lamb_k} \Big)+(3n-2)x.
\end{align}
Combining both holonomic systems leads to formidable Gr\"obner basis computation,
apparently. Computation of differential operators for the rank $\le 26$ system 
of Theorem \ref{th:hrank}\refpart{iv} is barely viable on {\sf Singular 4} 
(given several hours), but further manipulation is hard.

Conjecture \ref{th:conjecture} was checked for $m=4$
by expanding both sides of (\ref{eq:conj}) in the determinants of
\begin{equation*}
\left( \! \begin{array}{ccc}
1 & 1 & 1 \\
\lamb_1 & \! \lamb_2 &  \! \lamb_3 \\
\lamb_1^2 & \! \lamb_2^2 &  \! \lamb_3^2 
\end{array} \! \right), \
\left( \! \begin{array}{ccc}
1 & 1 & 1 \\
\lamb_1 & \! \lamb_2 &  \! \lamb_3 \\
\lamb_1^3 & \! \lamb_2^3 &  \! \lamb_3^3 
\end{array} \! \right), \
 \left( \! \begin{array}{ccc}
1 & 1 & 1 \\
\lamb_1^2 & \! \lamb_2^2 &  \! \lamb_3^2 \\
\lamb_1^3 & \! \lamb_2^3 &  \! \lamb_3^3 
\end{array} \! \right), \
 \left( \! \begin{array}{ccc}
\lamb_1 & \! \lamb_2 &  \! \lamb_3 \\
\lamb_1^2 & \! \lamb_2^2 &  \! \lamb_3^2 \\
\lamb_1^3 & \! \lamb_2^3 &  \! \lamb_3^3 
\end{array} \! \right), \ldots,
\end{equation*}
and comparing the coefficients to these four determinants.
For example, comparison of the first coefficient by Lemma \ref{th:schur} gives
\begin{align*}
\frac{1}{2(n\!-\!2)^2(n\!-\!1)}\dif{x} \det & \left( \begin{array}{ccc}
\gamma(n,x) & \gamma(n-1,x) & \gamma(n-2,x) \! \\
\! \gamma(n+1,x) & \gamma(n,x) & \gamma(n-1,x) \! \\
\! \gamma(n+2,x) & \gamma(n+1,x) & \gamma(n,x) \\
\end{array} \right) \\
& \hspace{60pt} = C(x) \det \left( \! 
\begin{array}{ccc}1 & \frac{x}{n-2} & \! \frac{x^2}{2(n-2)(n-1)} \!\! \\[3pt]
\widetilde{c}^{\,(2)}_0 & \widetilde{c}^{\,(2)}_1 & \widetilde{c}^{\,(2)}_2  \\[2pt]
c^{(3)}_0 & c^{(3)}_1 & c^{(3)}_2 \end{array} \right)\!,
\end{align*}
where $\widetilde{c}^{\,(2)}_k$ are the shifted $n\mapsto n-1$ versions of $c^{(2)}_k$ in (\ref{eq:ccc}),
and $c^{(3)}_k$ are the first coefficients of the expansion of $G_{n,3}(x,y)$:
\begin{align}
c^{(3)}_0= & \, -nx+n(n-3)-n\, \big((x-n)^2+4x-3n+2\big) x^{-n}\exxp{x}\gamma(n,x), \nonumber \\
c^{(3)}_1= & \, -nx+n(n-1)-n\, \big((x-n)^2+2x-n\big) x^{-n}\exxp{x}\gamma(n,x), \\
c^{(3)}_2= & \, \frac{-nx+n(n+1)}2-\frac{n}2\,\big((x-n)^2+n\big) x^{-n}\,\exxp{x}\gamma(n,x). \nonumber
\end{align}
For an intermediate check, here is a quadratic expression 
in $A=\gamma(n-1,x)$ and $E=x^{n-1}\exxp{-x}$
for the first coefficient:
\begin{align}
\frac{x^{n-3}\exxp{-x}}{n-2} \,\Big( & 
-\!\big( (x-n+1)^2 A+(x-n)E \big)^2  +(n-1)(n-1-4x) A^2 \quad \nonumber \\
&+2(x^2+2x-n^2+n)AE +2(x+n)E^2  \Big).
\end{align}
To set up $C(x)$ in the conjecture, we compared the similar first coefficients for $m=4$ as well.

\subsection*{Acknowledgement}

This work was supported by the JSPS Grant-in-Aid for Scientific
Research No.~25220001.
% Thanks Christopher Koutchan, Nico Temme, Viktor Levandovskyy, Nobuki Takayama?

\small

\bibliographystyle{abbrv}
\bibliography{LRWishart}

\end{document}